\numberwithin{equation}{section}
\theoremstyle{plain}
\newtheorem{theorem}{Theorem}[section]
\newtheorem{lemma}[theorem]{Lemma}
\newtheorem{proposition}[theorem]{Proposition}
\theoremstyle{definition}
\newtheorem{definition}[theorem]{Definition}
\newtheorem{example}[theorem]{Example}
\newtheorem{remark}[theorem]{Remark}
\newcommand{\di}{\,\mathrm{d}}
\newcommand{\diver}{{\mathrm{div}}}
\newcommand{\exc}{{\mathbf{e}}}
\newcommand{\G}{\mathbb{G}}
\newcommand{\graph}{\mathrm{gr}}
\newcommand{\haus}{\mathscr{H}}
\newcommand{\hgt}{\text{\large\Fontamici h}}
\newcommand{\ilip}{\mathrm{Lip}_{\W}}
\newcommand{\leb}{\mathscr{L}}
\newcommand{\lip}{\mathrm{Lip}}
\newcommand{\loc}{{\mathrm{loc}}}
\newcommand{\maty}{{{\tilde{\mathsf  B}}}}
\newcommand{\mat}{{\mathsf B}}
\newcommand{\matn}{{\mathcal C}}
\newcommand{\N}{\mathbb{N}}
\newcommand{\pp}{\parallel}
\newcommand{\R}{\mathbb{R}}
\newcommand{\shaus}{\mathscr{S}}
\newcommand{\spann}{\mathrm{span}}
\newcommand{\sphere}{\mathbb{S}}
\newcommand{\V}{{\mathbb{V}}}
\newcommand{\W}{{\mathbb{W}}}
\DeclarePairedDelimiter{\abs}{|}{|}
\DeclarePairedDelimiter{\scalar}{\langle}{\rangle}
\DeclarePairedDelimiter{\set}{\{}{\}}
\DeclareMathOperator{\ad}{ad}
\begin{document}

\title[Lipschitz approximation in plentiful groups]{Lipschitz approximation of almost $\mathbb G$-perimeter minimizing boundaries in plentiful groups}

\author[A.~Pinamonti]{Andrea Pinamonti}
\address[A.~Pinamonti]{Dipartimento di Matematica, Università di Trento, via Sommarive 14, 38123 Povo (TN), Italy}
\email{andrea.pinamonti@unitn.it}

\author[G.~Stefani]{Giorgio Stefani}
\address[G.~Stefani]{Scuola Internazionale Superiore di Studi Avanzati (SISSA), via Bonomea 265, 34136 Trieste (TS), Italy}
\email{gstefani@sissa.it {\normalfont or} giorgio.stefani.math@gmail.com}

\author[S.~Verzellesi]{Simone Verzellesi}
\address[S.~Verzellesi]{Dipartimento di Matematica, Università di Trento, via Sommarive 14, 38123 Povo (TN), Italy}
\email{simone.verzellesi@unitn.it}

\date{\today}

\keywords{Carnot groups, minimal surface, regularity theory, Lipschitz approximation, intrinsic graphs, De Giorgi's excess}

\subjclass[2020]{Primary 49Q05. Secondary 53C17, 35R03, 28A75.}

\thanks{\textit{Acknowledgements}. 
The authors thank Daniela Di Donato, Roberto Monti, Francesco Serra Cassano and Davide Vittone for interesting and valuable conversations on the topic of the paper.
The authors are members of the Istituto Nazionale di Alta Matematica (INdAM), Gruppo Nazionale per l'Analisi Matematica, la Probabilità e le loro Applicazioni (GNAMPA).
{\color{red} The first-named author has received funding from TBA.}
The second-named author has received funding from INdAM under the INdAM--GNAMPA 2023 Project \textit{Problemi variazionali per funzionali e operatori non-locali}, codice CUP\_E53\-C22\-001\-930\-001, and from the European Research Council (ERC) under the European Union’s Horizon 2020 research and innovation program (grant agreement No.~945655).
The third-named author has received funding from INdAM under the INdAM--GNAMPA 2023 Project \textit{Equazioni differenziali alle derivate parziali di tipo misto o dipendenti da campi di vettori}, codice CUP\_E53\-C22\-001\-930\-001. 
}

\begin{abstract} 
We prove that the boundary of an almost minimizer of the intrinsic perimeter in a plentiful group can be approximated by intrinsic Lipschitz graphs.
Plentiful groups are Carnot groups of step~$2$ whose center of the Lie algebra is generated by any co-dimension one horizontal subspace.
For example, $H$-type groups not isomorphic to the first Heisenberg group are plentiful.
Our results provide the first extension of the regularity theory of intrinsic minimal surfaces beyond the family of Heisenberg groups.
\end{abstract}

\maketitle

\section{Introduction}

\subsection{Framwork}

A \emph{Carnot group} is a Lie group whose Lie algebra admits a suitable stratification in which the first layer---the so-called \emph{horizontal distribution}---generates all the other layers~\cites{BLU07,S16}. 
Non-commutative Carnot groups, endowed with the \emph{Carnot--Carathéodory distance} naturally induced by the horizontal distribution, are not Riemannian at any scale, hence providing an interesting and reach setup for  Analysis. 

The study of Geometric Measure Theory in Carnot groups started from the pioneering work~\cite{FSS01}, and the regularity of sets that are local minimizers for the \emph{horizontal perimeter}, i.e., the perimeter naturally induced by the horizontal distribution, is one of the most important open problems in the field. 
All regularity results known so far are limited to the \emph{Heisenberg groups} $\mathbb H^n$, $n\ge1$, and assume some additional strong \textit{a priori} regularity and/or some restrictive geometric structure of the minimizer~\cites{CapCM09,CCM10,CHY09,SV14,M15}. 
On the other hand, there are examples of minimal surfaces in the first Heisenberg group~$\mathbb H^1$ that are only Lipschitz continuous in the standard sense~\cites{P06,R09}.

The first step in the celebrated De Giorgi's regularity theory for sets of finite perimeter in $\R^n$ is based on a good approximation of the boundary of minimizing sets~\cites{G84,M12}, namely, the so-called \emph{Lipschitz approximation}.
In the original strategy, the approximation is made by convolution and the estimates strongly rely on a
\emph{monotonicity formula}.
However, the validity of such a formula is an open problem in the sub-Riemannian setting~\cite{DGN10}. 
A more flexible approach has been proposed in~\cite{SS82} by means of Lipschitz graphs.
Although the boundary of sets with finite horizontal perimeter may be quite irregular from an Euclidean point of view~\cite{KS04}, the natural notion of \emph{intrinsic Lipschitz graphs}~\cites{FS16,FSS06} turns out to be effective in the approximation within this framework~\cites{M14,MS17,M15}.

\subsection{Main result}

In the present paper, we provide an extension of the approximation by means of intrinsic Lipschitz graphs in the Heisenberg groups $\mathbb{H}^n$ for $n\geq 2$, achieved in~\cites{M14,MS17}, in a more general class of Carnot groups of step~$2$, that we call \emph{plentiful groups}.

In a nutshell, plentiful groups are characterized by the property that any co-dimension~$1$ subspace of the first layer of their Lie algebra still generates the second layer (see \cref{subsec:plentiful}). 
The class of plentiful groups not only includes the important family of \emph{$H$-type groups}~\cite{K80}, but also other interesting examples (see~\cref{example} below). 

Our main result can be stated as follows, see \cref{sec:preliminaries,sec:ilip} for the notation.
For an even more general result concerning \emph{almost minimizers}, see \cref{res:lipa}.

\begin{theorem}[Intrinsic Lipschitz approximation]\label{mainintro}
Let $\G$ be a plentiful group.
For any $L\in(0,1)$, there exist $\varepsilon,C>0$, depending on $L$ only, with the following property.
If $\nu$ is a horizontal direction and $E\subset\G$ is a minimizer of the $\G$-perimeter in the cylinder $C_{324}$ with intrinsic cylindrical excess $\exc(E,0,324,\nu)\le\varepsilon$ and $0\in\partial E$, then, letting 
\begin{equation*}
M=C_1\cap\partial E,
\quad
M_0=\set*{
q\in M : \sup_{0<r<16}\exc(E,q,r,\nu)\le\varepsilon},
\end{equation*}
there exists an intrinsic Lipschitz funciton $\varphi\colon\W\to\R$ such that 
\begin{equation*}
\sup_\W|\varphi|\le L,
\quad
\ilip(\varphi)\le c_\G\,L,
\end{equation*}
\begin{equation*}
M_0\subset M\cap\Gamma,
\quad
\Gamma=\graph(\varphi;D_1),
\end{equation*}
\begin{equation*}
\shaus_\infty^{Q-1}(M\bigtriangleup\Gamma)
\le 
C\,\exc(E,0,324,\nu),
\end{equation*}
\begin{equation*}
\int_{D_1}|\nabla^\varphi\varphi|^2\di\leb^{n-1}
\le 
C\,\exc(E,0,324,\nu),
\end{equation*}
where $c_\G>0$ is a structural constant independent of $L$.
\end{theorem}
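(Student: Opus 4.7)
The plan is to follow the scheme pioneered by Schoen--Simon~\cite{SS82}, and adapted to the Heisenberg groups $\mathbb{H}^n$ with $n\ge2$ by Monti and by Monti--Stefani in~\cites{M14,MS17}: identify a ``good'' subset $M_0$ of $\partial E$ where the excess is uniformly small at every scale, prove that $M_0$ sits on an intrinsic Lipschitz graph over $\pi_\W(M_0)$, extend this partial graph to all of $\W$, and finally bound both the symmetric difference $M\bigtriangleup\Gamma$ and the Dirichlet-type energy by the original cylindrical excess. Plentifulness enters throughout as the substitute for the Heisenberg commutator identities: it provides the only property really needed, namely that the codimension-one subspace $\nu^\perp\subset\V_1$ generates the whole second layer $\V_2$, which is what guarantees the usual calculus and cone characterisations for intrinsic Lipschitz graphs along $\nu$.

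First, I would establish a quantitative height bound: for every $q\in M_0$ and $0<r<16$, the portion $\partial E\cap C_r(q)$ is confined to a slab around $q\cdot\W$ of thickness controlled by $\sqrt{\exc(E,q,r,\nu)}$. This is the standard slicing--minimality argument, transferring from $\mathbb{H}^n$ to a general plentiful group with only cosmetic changes. From it I would deduce the geometric core of the proof, namely that two distinct points $q_1,q_2\in M_0$ cannot have the same projection onto $\W$: otherwise the height bound applied at a scale comparable to their $\nu$-distance, combined with the smallness of $\varepsilon$, yields a contradiction. The resulting single-valued map $\varphi_0$ on $\pi_\W(M_0)$ then satisfies an intrinsic cone estimate $|\varphi_0(w)-\varphi_0(w')|\le c_\G L\, d_\W(w,w')$, again via a two-point version of the height bound at joint scales. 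In plentiful groups this cone estimate is equivalent to $\ilip(\varphi_0)\le c_\G L$.

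Next, I would extend $\varphi_0$ to an intrinsic Lipschitz function $\varphi\colon\W\to\R$ via the Franchi--Serapioni--Serra Cassano extension theorem~\cite{FS16}, and truncate to enforce $\sup_\W|\varphi|\le L$. The bound on $\shaus_\infty^{Q-1}(M\bigtriangleup\Gamma)$ follows from a Vitali-type covering argument applied to $M\setminus M_0$, which consists of points where the excess exceeds $\varepsilon$ at some scale: a weak-type maximal inequality for the subadditive set function $q\mapsto\exc(E,q,r,\nu)$ gives $\shaus_\infty^{Q-1}(M\setminus M_0)\le C\exc(E,0,324,\nu)$. Since $M_0\subset\Gamma$ by construction, the part of $\Gamma\cap C_1$ not coming from $M_0$ is then controlled via the intrinsic area formula for Lipschitz graphs together with the Lipschitz bound. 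The energy estimate rests on the pointwise identity relating $|\nabla^\varphi\varphi|^2$ to the defect $1-\scalar{\nu_E,\nu}$ on $\Gamma\cap\partial E$, which integrates to the excess up to the measure bound on the exceptional set; the contribution from $D_1\setminus\pi_\W(M_0)$ is then bounded using $|\nabla^\varphi\varphi|\le c_\G L$ together with the already-established measure estimate.

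The main obstacle I expect is \emph{not} the geometric measure theory part, which is essentially a translation of the $\mathbb{H}^n$ arguments, but the verification inside an abstract plentiful group that the cone condition is equivalent to being an intrinsic Lipschitz graph and that $\nabla^\varphi$ obeys the expected calculus rules. Both hinge on $[\nu^\perp,\nu^\perp]=\V_2$, and the Heisenberg proofs in~\cites{M14,MS17}, which exploit the explicit commutator structure, must be recast in intrinsic terms with care for the behaviour under dilations and for the non-abelian structure of $\W$.
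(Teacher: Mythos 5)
Your overall architecture (good set $M_0$, graphicality over $\pi_\W(M_0)$, extension, Vitali covering for $M\setminus M_0$, area formula for the energy) matches the paper's proof of \cref{res:lipa}. But the step you rest everything on --- a quantitative height bound stating that $\partial E\cap C_r(q)$ lies in a slab of thickness $C\sqrt{\exc(E,q,r,\nu)}$, ``transferring from $\mathbb H^n$ with only cosmetic changes'' --- is precisely what is \emph{not} available in plentiful groups. That bound is the Monti--Vittone height estimate of~\cite{MV15}, whose proof in $\mathbb H^n$, $n\ge2$, is delicate and whose extension to plentiful groups is explicitly left open in this paper; its absence is the stated reason why here $\varepsilon$ and $C$ must depend on $L$ and why the result is only a sub-optimal analogue of~\cite{MS17}. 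The paper circumvents it with the qualitative small-excess position lemma (\cref{res:small_exc_pos}): by compactness of $(\Lambda,r_0)$-minimizers and the rigidity theorem \cref{res:flat} (zero excess forces a flat vertical boundary --- this is where plentifulness is genuinely used), one gets that excess below a threshold $\omega(L,\cdot,\cdot)$ forces height at most $L$, with no quantitative rate. Applying this to the rescaled minimizer $\delta_{\lambda^{-1}}(q^{-1}\star E)$ with $\lambda=d_C(p,q)$, for $p\in M$ and $q\in M_0$, yields $|\hgt(q^{-1}\star p)|\le L\,d_C(p,q)$, hence the intrinsic Lipschitz bound with constant $L$ rather than $\sqrt{\varepsilon}$. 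Your argument should be recast on this basis; as written it invokes an unproved estimate.

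Two smaller points. First, you locate the role of plentifulness in the equivalence between the cone condition and intrinsic Lipschitz graphs and in the calculus for $\nabla^\varphi$; those facts hold in arbitrary step-$2$ Carnot groups (they are quoted from~\cites{FS16,D20,ADDL20}). Plentifulness enters only through \cref{res:flat} (via \cref{res:flat_level_sets}): a minimizer with constant horizontal normal must be a vertical half-space, which feeds the compactness argument behind \cref{res:small_exc_pos}. Second, no truncation is needed after extension: the extension theorem \cref{res:extension} already preserves the $L^\infty$ norm, and it is the explicit control $\ilip(\psi)\le c\max\{L,L^4\}$ there that produces the structural constant $c_\G$ in the statement.
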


\cref{mainintro} perfectly generalizes~\cite{M14}*{Th.~5.1} to plentiful groups, in fact providing a sub-optimal version of the Lipschitz approximation proved in~\cite{MS17}*{Th.~3.1} in $\mathbb H^n$ for $n\ge2$ (also see~\cite{M12}*{Th.~23.7} for the corresponding result in the Euclidean setting).

In \cref{mainintro}, differently from the corresponding result in~\cite{MS17}, the constants $\varepsilon$ and $C$ may depend on the chosen Lipschitz constant $L$. 
This is due to the current lack of an analog of the deep \emph{height estimate} proved in~\cite{MV15} for $\mathbb H^n$, with $n\ge2$, in plentiful groups. 
However, we believe that the algebraic framework provided by plentiful groups is the correct setting where to possibly extend \cref{mainintro} to its optimal version. The validity of the height estimate in the context of plentiful groups will be the object of future works.

\subsection{Organization of the paper}
The rest of the paper is organized as follows. 
In \cref{sec:preliminaries}, we fix the notation and we recall some basic preliminaries. 
In \cref{subsec:plentiful}, we introduce the class of plentiful groups and we study their main properties. 
In \cref{sec:ilip}, we recall some facts about intrinsic cones, intrinsic Lipschitz graphs and the intrinsic area formula. 
Finally, in \cref{ilasec}, we prove our main result.

\section{Preliminaries}\label{sec:preliminaries}	

We recall the main notation and results used throughout the paper.
For a thorough introduction on the subject, we refer to~\cites{FSS03,BLU07,S16} concerning Carnot groups, and to~\cite{M12} for the usual approach to the regularity theory for minimal surfaces in the Euclidean setting.

\subsection{Carnot groups}

A Carnot group $(\G,\star)$ is a connected, simply connected and nilpotent Lie group whose Lie algebra $\mathfrak{g}$ of left-invariant vector fields has dimension $n$ and admits a stratification of step $s\in\N$, that is, 
\begin{equation*}
\mathfrak{g}=V_1\oplus V_2\oplus\cdots\oplus V_s,
\end{equation*}
where the vector spaces $V_1,\dots,V_s\subset\mathfrak g$ satisfy
\begin{equation*}
V_i=[V_1,V_{i-1}]\quad \text{for}\ i=1,\dots,s-1, \quad [V_1,V_s]=\set{0}. 
\end{equation*}
We set $m_i=\dim(V_i)$ for $i=1,\dots,s$. 
We fix an \emph{adapted basis} of $\mathfrak{g}$, i.e.\ a basis $X_1,\dots,X_n$ such that
\begin{equation*}
X_{h_{i-1}+1},\dots,X_{h_i}\ \text{is a basis of}\ V_i,\quad i=1,\dots,s.
\end{equation*}
We endow the algebra $\mathfrak{g}$ with the left-invariant Riemannian metric $\scalar*{\cdot,\cdot}$ that makes the basis $X_1,\dots,X_n$ orthonormal.
Exploiting the exponential identification $p=\exp\left(\sum_{i=1}^np_iX_i\right)$, we can identify $\G$ with $\R^n$, endowed with the group law determined by the Campbell--Hausdorff formula.
In particular, the identity $e\in\G$ corresponds to $0\in\R^n$ and the \emph{inversion map} becomes $\iota(p)=p^{-1}=-p$ for any $p\in\G$. Moreover, it is not restrictive to assume that $X_i(0)=\mathrm{e}_i$ for any $i=1,\dots,n$. 
Therefore, by left-invariance, we get
\begin{equation*}
X_i(p)=d\tau_p\mathrm{e}_i, \quad i=1,\dots,n,
\end{equation*}
where $\tau_p\colon\G\to\G$ is the \emph{left-translation} by $p\in\G$, i.e.\ $\tau_p(q)=p\star q$ for any $q\in\G$. 

For any $i=1,\dots,n$, the \emph{degree} $d(i)\in\set*{1,\dots,\kappa}$ of the basis vector field $X_i$ is $d(i)=j$ if and only if $X_i\in V_j$. 
The group \emph{dilations} $(\delta_\lambda)_{\lambda\ge0}\colon\G\to\G$ are hence given by
\begin{equation*}
\delta_\lambda(p)=\delta_\lambda(p_1,\dots,p_n)
=(\lambda p_1,\dots,\lambda^{d(i)} p_i,\dots,\lambda^s p_n) 
\quad
\text{for all}\ p\in\G.
\end{equation*}

The Haar measure of the group $\G$ coincides with the $n$-dimensional Lebesgue measure~$\leb^n$.
The homogeneity property $\leb^n(\delta_\lambda(E))=\lambda^Q\leb^n(E)$ holds for any measurable set $E\subset\G$, where $Q=\sum_{i=1}^\kappa i\dim(V_i)\in\N$ is the \emph{homogeneous dimension} of $\G$.
For notational convenience, we use the shorthand $|E|=\leb^n(E)$. 

Following~\cite{FSS03}*{Th.~5.1}, we fix the left-invariant and homogeneous distance $d_\infty(p,q)=d_\infty(q^{-1}\cdot p,0)$ for $p,q\in\G$, where, identifying $\G=\R^n=\R^{m_1}\times\dots\times\R^{m_s}$ as above and letting $\pi_{\R^{m_i}}\colon\R^n\to\R^{m_i}$ be the canonical projection for $i=1,\dots,s$, 
\begin{equation}
\label{eq:distance}
d_\infty(p,0)
=
\max\set*{\epsilon_i|\pi_{\R^{m_i}}(p)|_{\R^{m_i}}^{1/i} : i=1,\dots,s}
\quad
\text{for all}\ p\in\G
,
\end{equation}
with constants $\epsilon_1=1$ and $\epsilon_i\in(0,1)$ for all $i=2,\dots,s$ depending on the structure of $\G$.
We use the shorthand $\|p\|_\infty=d_\infty(p,0)$ for $p\in\G$.
Consequently, for $p\in\G$ and $r>0$, we define the open and closed balls
\begin{equation*}
B_r(p)
=
\set*{q\in\G : d(q,p)<r},
\quad
\bar B_r(p)
=
\set*{q\in\G : d(q,p)\le r},
\end{equation*}
with the shorthands $B_r=B_r(0)$ and $\bar B_r=\bar B_r(0)$.

\subsection{Sets of finite perimeter}

A set $E\subset\G$ is of \emph{locally finite $\G$-perimeter} in an open set $\Omega\subset\G$ if there exists a $\R^{m_1}$-valued Radon measure $\mu_E$ on $\Omega$ such that 
\begin{equation*}
\int_E\diver_\G\phi\di x
=
-
\int_\Omega\scalar{\phi,\di\mu_E}
\quad
\text{for all}\
\phi\in C^1_c(\Omega;\R^{m_1}).
\end{equation*}
Here and in the following, $\diver_\G\phi=\sum_{i=1}^{m_1}X_i\phi_i$ is the  \emph{horizontal divergence} of $\phi$.
If $|\mu_E|(\Omega)<+\infty$, then $E$ has \emph{finite $\G$-perimeter} in $\Omega$.
We also use the notation $P(E;A)=|\mu_E|(A)$ for any Borel $A\subset\G$ and the shorthand $P(E)=P(E;\G)$.
If $E\subset\G$ has (Euclidean) Lipschitz topological boundary $\partial E$, then 
\begin{equation}
\label{eq:perimeter_lip}
P(E;\Omega)
=
\int_{\partial E\cap\Omega}
\left(
\sum_{i=1}^{m_1}\scalar*{N_E,X_i}^2
\right)^{1/2}\di\haus^{n-1},
\end{equation}
where $N_E$ is the standard (inner) unit normal to $\partial E$ and $\haus^s$ is the standard $s$-Hausdorff measure in $\R^n$, $s\in[0,n]$.
By the Radon--Nykodim Theorem, there is a Borel function $\nu_E\colon\Omega\to\R^{m_1}$, called (\emph{measure-theoretic}) \emph{inner horizontal normal} of $E$ in $\Omega$, such that $\mu_E=\nu_E|\mu_E|$ with $|\nu_E|=1$ $\mu_E$-a.e.\ in $\Omega$.
The \emph{reduced boundary} of $E$ is the set $\partial^*E$ of $p\in\G$ such that 
\begin{equation*}
p\in\mathrm{supp}|\mu_E|
\quad
\text{and}
\quad
\nu_E(p)=\lim_{r\to0^+}\frac{\mu_E(B_r(p))}{|\mu_E|(B_r(xp))}\in\sphere^{m_1}.
\end{equation*}
The (\emph{measure-theoretic}) \emph{boundary} of a measurable set $E\subset\G$ is 
\begin{equation}
\label{eq:boundary}
\partial E
=
\set*{p\in \G : |E\cap B_r(p)|>0\
\text{and}\
|E^c\cap B_r(p)|>0\
\text{for all}\ r>0
}.
\end{equation}
Up to modify a set $E\subset\G$ of locally finite $\G$-perimeter in an $\leb^n$-negligible way, arguing \emph{verbatim} as in~\cite{M12}*{Prop.~12.19}, we can always assume that $\partial E$ coincides with the topological boundary of~$E$.

\subsection{Perimeter minimizers}

Let $\Omega\subset\G$ be a (non-empty) open set and let $E\subset\G$  be a set with locally
finite $\G$-perimeter in $\G$. 
We say that the set $E$ is a
\emph{$(\Lambda,r_0)$-minimizer of the $\G$-perimeter in~$\Omega$} if there exist $\Lambda\in[0,+\infty)$ and $r_0\in(0,+\infty]$ such that
\begin{equation*}
P(E;B_r(p))\le P(F;B_r(p))+\Lambda\,|E\bigtriangleup F|
\end{equation*}
for any measurable set $F\subset\G$, $p\in\Omega$ and $r<r_0$ such that
$E\bigtriangleup F\Subset B_r(p)\Subset\Omega$.
If $\Lambda=0$ and $r_0=\infty$, then $E$ is a \emph{locally
$\G$-perimeter minimizer in~$\Omega$}, that is, 
\begin{equation*}
P(E;B_r(p))\le P(F;B_r(p))
\end{equation*}
for any measurable set $F\subset\G$, $p\in\Omega$  and $r>0$ such that
$E\bigtriangleup F\Subset B_r(p)\Subset\Omega$.
 
\begin{remark}[Scaling of $(\Lambda,r_0)$-minimizers]
\label{rem:scaling}
If the set $E$ is a $(\Lambda,r_0)$-minimizer of the $\G$-perimeter in $\Omega\subset\G$, then the set $E_{p,r}=\delta_{\frac{1}{r}}(\tau_{p^{-1}}(E))$ is a $(\Lambda',r_0')$-minimizer of the $\G$-perimeter in $\Omega_{p,r}=\delta_{\frac{1}{r}}(\tau_{p^{-1}}(\Omega))$ for every $p\in\G$ and $r>0$, where
$\Lambda'=\Lambda r$ and $r_0'=r_0/r$. 
In particular, the product $\Lambda r_0$ is invariant by dilation, and thus it is convenient to assume that $\Lambda r_0\le1$, as we
shall always do in the following.  
\end{remark}

\subsection{Carnot groups of step 2}
\label{subsec:step_2}

From now on, we work in a Carnot group $(\G,\star)$ of step $s=2$, so that $\mathfrak g=V_1\oplus V_2$, $[V_1,V_1]=V_2$, $[V_1,V_2]=\set*{0}$, $n=m_1+m_2$ and $Q=m_1+2m_2$.
We fix an adapted orthonormal basis $X_1,\dots,X_{m_1},T_1,\dots,T_{m_2}$ of~$\mathfrak g$, so that $X_1,\dots,X_{m_1}$ and $T_1,\dots ,T_{m_2}$ are orthonormal bases of $V_1$ and $V_2$, respectively.
As well known (see~\cite{BLU07}*{Sec.~3.2} for instance), exploiting exponential coordinates associated to $X_1,\dots,X_{m_1},T_1,\dots,T_{m_2}$,
\begin{equation}
\label{eq:group_law_2}
p\star q
=
(x,t)\star(\xi,\tau)
=
\left(x+\xi,t+\tau+\tfrac12\scalar*{\mat x,\xi}\right)
\end{equation}
for $p,q\in\G$, with $p=(x,t)$, $q=(\xi,\tau)$, $x,\xi\in\R^{m_1}$, $t,\tau\times\R^{m_2}$, where $\mat =(\mat ^1,\dots,\mat ^{m_2})$ is an $m_2$-tuple of linearly independent skew-symmetric $m_1\times m_1$ matrices and 
\begin{equation*}
\scalar*{\mat x,\xi}
=
\left(\scalar*{\mat ^1x,\xi},\dots,\scalar*{\mat ^{m_2}x,\xi}\right)\in\R^{m_2}.
\end{equation*} 
With this notation, we recognize that $\|p\|_\infty=\max\set*{|x|,\epsilon_2\sqrt{|t|}}$ and $\delta_\lambda(p)=\delta_\lambda(x,t)=(\lambda x,\lambda^2 t)$ for $\lambda\ge0$ and $p=(x,t)\in\G$.
Finally, we let $\matn\in(0,+\infty)$ be such that 
\begin{equation}
\label{eq:b_constold}
\abs*{\scalar*{\mat x,\xi}}
\le
\matn\,|x|\,|\xi|
\quad
\text{for all}\
x,\xi\in\R^{m_1}.
\end{equation}

\subsection{Stratified changes of coordinates}\label{coc}

Let $X_1',\dots,X_{m_1}'$ be another orthonormal basis of $V_1$.
Given $p\in\G$, let $p=(x',t)$ be the exponential coordinates associated with the adapted basis $X_1',\dots,X_{m_1}',T_1,\dots,T_{m_2}$. 
Then $x'=Mx$, for a suitable orthogonal $m_1\times m_1$ matrix~$M$. 
Being $M$ orthogonal, $\|\cdot\|_\infty$ is not affected by this change of coordinates. Moreover, in these new coordinates,
\begin{equation*}
    p\star q=(x',t)\star(\xi',\tau)=\left(x'+\xi',+\tfrac12\scalar*{\maty x',\xi'}\right),
\end{equation*}
where $\maty =(\maty ^1,\dots,\maty ^{m_2})$ and $\maty^j=M\mat^jM^T$ for any $j=1,\ldots,m_2$. Notice that
\begin{equation*}
    \sup_{x'\neq 0}\frac{|\maty^j x'|}{|x'|}= \sup_{x'\neq 0}\frac{|M\mat^jM^T x'|}{|x'|}=\sup_{x'\neq 0}\frac{|\mat^jM^T x'|}{|x'|}=\sup_{x'\neq 0}\frac{|\mat^jM^T x'|}{|M^Tx'|}=\sup_{x\neq 0}\frac{|\mat^jx|}{|x|}
\end{equation*}
for any $j=1,\ldots,m_2$, which in turn implies that
\begin{equation}\label{eq:b_const}
    \abs*{\scalar*{\maty x',\xi'}}
\le
\matn\,|x'|\,|\xi'|,
\end{equation}
with the same constant $\matn$ as in \eqref{eq:b_constold}. We stress that, although the above change of coordinates induces an isometry of $\mathfrak g$, it may not be a group morphism (e.g., see~\cite{M03}*{Ex.~2.15}). 
In fact, a simple computation shows that $M$ induces a group morphism if and only if $\mat^jM=M\mat^j$ for every $j=1,\ldots,m_2$.

\subsection{Further properties of perimeter minimizers}

In a Carnot group $\G$ of step~$2$, locally finite $\G$-perimeter sets enjoy further regularity properties, see~\cite{FSS03}*{Sec.~3}.
In particular, for any set $E\subset\G$ with locally finite $\G$-perimeter, 
\begin{equation*}
P(E;A)=\shaus^{Q-1}_\infty(\partial^*E\cap A)
\quad
\text{for each Borel}\
A\subset\G,
\end{equation*}
see~\cite{FSS03}*{Th.~3.10} and~\cite{M17}*{Th.~1.3} (as well as the discussion around~\cite{S16}*{Th.~5.18}).
Here and in the rest of the paper, for any $E\subset\G$ we let
\begin{equation*}
\shaus^s_\infty(E)
=
\sup_{\delta>0}\shaus^s_{\infty,\delta}(E),
\end{equation*}
be the \emph{spherical $s$-Hausdorff measure of $E$} (relative to $d_\infty$ in~\eqref{eq:distance}), where, for any $\delta>0$,
\begin{equation*}
\shaus^s_{\infty,\delta}(E)
=
\inf\set*{
c_\G\sum_{i\in\N}
(\mathrm{diam}_{d_\infty} B_i)^s
:
E\subset\bigcup_{i\in\N} B_i,
\
B_i\ \text{$d_\infty$-ball with}\ \mathrm{diam}_{d_\infty} B_i<\delta 
},
\end{equation*}
where $c_\G>0$ is a renormalizing constant that we do not need to specify here.
We can state the following results concerning the properties of $(\Lambda,r_0)$-minimizers of the $\G$-perimeter in Carnot groups of step~$2$.
The proofs are straightforward adaptations of those for $(\Lambda,r_0)$-minimizers of the Euclidean perimeter in $\R^n$, see~\cite{M12}*{Ch.~21}.
\begin{theorem}[Density estimates]
\label{res:density}
There exist $c_1,c_2,c_3,c_4>0$ such that, if $E\subset\G$ is a $(\Lambda,r_0)$-minimizer of the $\G$-perimeter in the open set $\Omega\subset\G$, $\Lambda r_0\le 1$, $p\in\partial E\cap\Omega$, $B_{r_0}(p)\subset\Omega$, then
\begin{equation*}
c_1\le\frac{|E\cap B_r(p)|}{r^Q}\le c_2
\quad
\text{and}
\quad
c_3\le\frac{\mu_E(B_r(p))}{r^{Q-1}}\le c_4
\quad
\text{for}\ 
r\in(0,r_0).
\end{equation*}
In particular, $\shaus^{Q-1}_\infty\big((\partial E\setminus\partial^*E)\cap\Omega\big)=0$. 
\end{theorem}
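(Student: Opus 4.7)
The strategy is to transcribe the De Giorgi--Federer scheme of~\cite{M12}*{Ch.~21} into the step-$2$ Carnot setting, using the isoperimetric inequality in~$\G$ and its relative-in-balls counterpart, together with the dilation scaling of \cref{rem:scaling} to keep constants independent of $p$ and $r$. After a left-translation I may reduce to $p=0$ and fix $r\in(0,r_0)$ throughout.

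For the \emph{lower} volume density bound I would set $u(r)=|E\cap B_r|$ and compare $E$ with $F=E\setminus B_r$, which, after slightly enlarging $B_r$ to $B_s$ with $s>r$ so that the compact-containment condition in the definition of $(\Lambda,r_0)$-minimizer is fulfilled, produces
\begin{equation*}
P(E;\bar B_r)\le P(B_r;E)+\Lambda\,u(r)
\end{equation*}
after letting $s\downarrow r$. Combining this with the isoperimetric inequality $|E\cap B_r|^{(Q-1)/Q}\le C_\G\,P(E\cap B_r;\G)$, noting that $P(E\cap B_r;\G)\le P(E;\bar B_r)+P(B_r;E)$, and observing that both $P(B_r;E)$ and the derivative $u'(r)$ are controlled by the same coarea-type slicing of $\partial B_r\cap E$, yields a differential inequality of the form $u(r)^{(Q-1)/Q}\le C\bigl(u'(r)+\Lambda\,u(r)\bigr)$. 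Since $0\in\partial E$ forces $u(r)>0$ for every $r>0$ and the hypothesis $\Lambda r_0\le1$ tames the zero-order term, this integrates to $u(r)\ge c_1 r^Q$. The \emph{upper} volume density bound then follows by applying the same argument to $\G\setminus E$, which is itself a $(\Lambda,r_0)$-minimizer. For the perimeter, the \emph{upper} estimate $P(E;B_r)\le c_4 r^{Q-1}$ comes from the same comparison with $F=E\setminus B_r$ together with the dilation scaling $P(B_r)=r^{Q-1}P(B_1)$, while the \emph{lower} estimate $P(E;B_r)\ge c_3 r^{Q-1}$ is immediate from the two-sided volume bound plugged into the relative isoperimetric inequality
\begin{equation*}
\min\set*{|E\cap B_r|,\,|B_r\setminus E|}^{(Q-1)/Q}\le C\,P(E;B_r).
\end{equation*}

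For the $\shaus^{Q-1}_\infty$-vanishing of $(\partial E\setminus\partial^*E)\cap\Omega$, I would argue via a Vitali covering: this set is $|\mu_E|$-negligible by the very definition of the reduced boundary, hence it can be enclosed in an open set $U$ with $|\mu_E|(U)<\eta$ arbitrarily small and finely covered by disjoint $d_\infty$-balls $B_{r_i}(p_i)\subset U$ of radius $r_i<\delta$ centered at its points; the lower perimeter density just proved then gives $\sum_i r_i^{Q-1}\le|\mu_E|(U)/c_3<\eta/c_3$, driving the premeasure $\shaus^{Q-1}_{\infty,\delta}$ of the set to zero. The only point that requires genuine (though routine) sub-Riemannian care is the coarea-type slicing that bundles $u'(r)$ with $P(B_r;E)$: it rests on the Lipschitz character of the homogeneous gauge $d_\infty$ and on formula~\eqref{eq:perimeter_lip}, but the correct $r^{Q-1}$-homogeneity must be read off from the dilation scaling of \cref{rem:scaling} rather than from Euclidean dimensional considerations. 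Once this is settled, the rest of the argument reproduces~\cite{M12}*{Ch.~21} line by line.
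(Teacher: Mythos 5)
Your proposal follows essentially the same route as the paper, which simply states that the result ``follows by adapting the proof of~\cite{M12}*{Th.~21.11}, invoking~\cite{FSS03}*{Lem.~2.21 and Prop.~2.23}'' and omits all details; your sketch correctly identifies the two sub-Riemannian ingredients needed for that adaptation (the isoperimetric and relative isoperimetric inequalities in step-$2$ groups and the coarea-type control of $P(B_r;E)$ by $u'(r)$, which is exactly the content of the cited lemmas from~\cite{FSS03}), and the remaining steps, including the Vitali argument for $\shaus^{Q-1}_\infty\big((\partial E\setminus\partial^*E)\cap\Omega\big)=0$, transcribe Maggi's scheme as intended.
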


\begin{proof}
The result follows by adapting the proof of~\cite{M12}*{Th.~21.11}, invoking~\cite{FSS03}*{Lem.~2.21 and Prop.~2.23}.
Details are omitted.
\end{proof}

\begin{theorem}[Compactness]
\label{res:compactness}
If $(E_j)_{j\in\N}$ is a sequence of $(\Lambda,r_0)$-minimizers of the $\G$-perimeter in the open set $\Omega\subset\G$, $\Lambda r_0\le 1$, then there exist a subsequence $(E_{j_k})_{k\in\N}$ and a $(\Lambda,r_0)$-minimizer of the $\G$-perimeter $E\subset\G$ in $\Omega$ such that 
\begin{equation*}
E_{j_k}\to E
\quad
\text{in}\ L^1_\loc(\Omega)
\quad
\text{and}
\quad
|\mu_{E_{j_k}}|
\overset{*}\rightharpoonup 
|\mu_E|
\quad
\text{as}\ k\to+\infty.
\end{equation*}
Moreover, $(\partial E_{j_k})_{k\in\N}$ converges to $\partial E$ in the sense of Kuratowski, i.e.: 
\begin{enumerate}[label=(\roman*),itemsep=1ex,topsep=1ex]

\item 
if $p_{j_k}\in\partial E_{j_k}\cap\Omega$ and $p_{j_k}\to p\in\Omega$ as $k\to+\infty$, then $p\in\partial E$;

\item
if $p\in\partial E\cap\Omega$, then there exist $p_{j_k}\in\partial E_{j_k}\cap\Omega$ such that $p_{j_k}\to p$ as $k\to+\infty$.

\end{enumerate}
\end{theorem}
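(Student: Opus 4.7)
The plan is to adapt the Euclidean proof of the compactness theorem for $(\Lambda,r_0)$-perimeter minimizers (cf.~\cite{M12}*{Th.~21.14}) by replacing the Euclidean perimeter with the $\G$-perimeter and using \cref{res:density} in place of its Euclidean counterpart.

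First I would show equi-boundedness of the perimeters on compact subsets $K\Subset\Omega$. Covering $K$ with finitely many balls $B_r(p)$ having $p\in\partial E_j\cap\Omega$ (or noting that the complement of $\partial E_j$ contributes zero perimeter), \cref{res:density} yields $P(E_j;K)\le C(K)$ uniformly in $j$. Then the standard $\G$-BV compactness theorem (Garofalo--Nhieu / Franchi--Serapioni--Serra Cassano) together with a diagonal argument on an exhaustion $\Omega_h\Subset\Omega$ produces a subsequence $(E_{j_k})$ with $E_{j_k}\to E$ in $L^1_\loc(\Omega)$ for some set $E$ of locally finite $\G$-perimeter. Weak$^*$ compactness of Radon measures allows us to further extract so that $|\mu_{E_{j_k}}|\overset{*}\rightharpoonup\mu$ for some Radon measure $\mu$ on $\Omega$; lower semicontinuity of the $\G$-perimeter gives $|\mu_E|\le\mu$ on open sets.

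Next I would prove that $E$ is itself a $(\Lambda,r_0)$-minimizer. Fix $p\in\Omega$, $r<r_0$ with $B_r(p)\Subset\Omega$, and a competitor $F$ with $E\bigtriangleup F\Subset B_r(p)$. For $s\in(r,r_0)$ with $\bar B_s(p)\Subset\Omega$, set $F_k=(F\cap B_s(p))\cup(E_{j_k}\setminus B_s(p))$, which differs from $E_{j_k}$ only inside $B_s(p)$. Minimality of $E_{j_k}$ then gives
\begin{equation*}
P(E_{j_k};\bar B_s(p))\le P(F_k;\bar B_s(p))+\Lambda\,|E_{j_k}\bigtriangleup F_k|.
\end{equation*}
On the right, $P(F_k;\bar B_s(p))\le P(F;B_s(p))+|\mu_{E_{j_k}}|(\partial B_s(p))+c\,\shaus^{Q-1}_\infty\big((E\bigtriangleup E_{j_k})\cap\partial B_s(p)\big)$, using a standard slicing decomposition of the boundary contribution. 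Passing $k\to\infty$ via lower semicontinuity of $|\mu_\cdot|$ on open sets and the weak$^*$ convergence $|\mu_{E_{j_k}}|\overset{*}\rightharpoonup\mu$ on closed sets, and then letting $s\downarrow r$ along the set of full measure in $(r,r_0)$ on which $\mu(\partial B_s(p))=0$, yields $|\mu_E|(B_r(p))\le P(F;B_r(p))+\Lambda\,|E\bigtriangleup F|$. Plugging $F=E$ into the same inequality (applied to $E_{j_k}$) and letting $k\to\infty$ gives $\mu(B_r(p))\le|\mu_E|(B_r(p))$ for a.e.\ $r$, hence $\mu=|\mu_E|$ and therefore $|\mu_{E_{j_k}}|\overset{*}\rightharpoonup|\mu_E|$.

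Finally, for Kuratowski convergence of the boundaries: if $p_{j_k}\in\partial E_{j_k}\cap\Omega$ with $p_{j_k}\to p\in\Omega$ but $p\notin\partial E$, then either $|E\cap B_\rho(p)|=0$ or $|E^c\cap B_\rho(p)|=0$ for some small $\rho>0$ with $B_\rho(p)\Subset\Omega$; but \cref{res:density} forces $|E_{j_k}\cap B_{\rho/2}(p_{j_k})|\ge c_1(\rho/2)^Q$ and the analogous estimate for the complement, contradicting the $L^1_\loc$ convergence $E_{j_k}\to E$. Conversely, if $p\in\partial E\cap\Omega$, for every small $\rho>0$ we have $|E\cap B_\rho(p)|>0$ and $|E^c\cap B_\rho(p)|>0$, so by $L^1_\loc$ convergence the same holds for $E_{j_k}$ when $k$ is large, hence $\partial E_{j_k}\cap B_\rho(p)\ne\emptyset$, and a diagonal extraction produces the required $p_{j_k}\to p$.

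The main technical obstacle is the slicing step in the minimality argument: one must control the extra perimeter picked up on $\partial B_s(p)$ when gluing $F$ inside with $E_{j_k}$ outside. This requires choosing $s$ from a set of full measure in $(r,r_0)$ where the boundary of $B_s(p)$ carries no $|\mu_{E_{j_k}}|$-mass and where the $L^1$ difference $(E\bigtriangleup E_{j_k})\cap\partial B_s(p)$ is negligible under the relevant spherical Hausdorff measure. In the Carnot setting this relies on a coarea formula for the distance $d_\infty$ together with the uniform density estimates from \cref{res:density}; once this is in place, the remaining arguments are essentially identical to those in~\cite{M12}*{Ch.~21}.
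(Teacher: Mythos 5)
Your proposal is correct and takes essentially the same approach as the paper, whose proof consists only of the remark that the result follows by adapting \cite{M12}*{Prop.~21.13 and Th.~21.14} using the density estimates of \cref{res:density}, with all details omitted. Your write-up is exactly that adaptation, and you correctly single out the slicing/gluing step on $\partial B_s(p)$ as the only point needing genuine care in the Carnot setting.
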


\begin{proof}
The result follows by adapting the proof of~\cite{M12}*{Prop.~21.13 and Th.~21.14}, exploiting the density estimates provided by \cref{res:density}.
Details are omitted.
\end{proof}

We underline that \cref{res:density,res:compactness} contain the only properties concerning $(\Lambda,r_0)$-minimizers of the $\G$-perimeter needed in the rest of the paper.

\subsection{Complementary subgroups}\label{complesubsect}
As in~\cite{FS16}*{Sec.~4}, we consider two \emph{complementary} subgroups $\W$ and $\V$ of $\G$, i.e., such that $\V\cap\W=\set*{0}$ and $\G=\W\star\V$.
We also assume that $\V$ is a $1$-dimensional and, consequently, horizontal subgroup of $\G$.
Precisely, we have 
\begin{equation*}
\V=\set*{\exp(sV) : s\in\R}
\quad
\text{for some $V\in V_1$ with $|V_1|=1$.}
\end{equation*}
In the following, in the spirit of \cref{coc}, we will often choose an orthonormal basis $X_1,X_2\ldots,X_{m_1}$ of $V_1$ adapted to the decomposition $\W\star\V$, that is $X_1=V_1$ and
\begin{equation*}
\V=\exp(\spann\set*{X_1}),
\quad 
\W=\exp(\spann\set*{X_2,\dots,X_{m_1},T_1,\dots,T_{m_2}}).
\end{equation*}
We naturally identify 
\begin{equation*}
\V\equiv\R,
\quad
\W\equiv\set*{p=(x,t)\in\G : x_1=0}\equiv\R^{n-1}.
\end{equation*} 
In particular, $w\in\R^{n-1}$ is identified with $(0,w)\in\G$.
Consequently, given $A\subset\W\equiv\R^{n-1}$, any function $\varphi\colon A\subset\W\to\V$ can be identified with a function $\varphi\colon A\subset\R^{n-1}\to\R$.

\subsection{Height function and projections}
For a given $\nu\in \mathbb{R}^{m_1}$ with $|\nu|=1$, we let the group homomorphism 
\begin{equation*}
\hgt\colon\G\to\R,
\quad
\hgt(p)=\langle\nu,x\rangle
\quad
\text{for}\ p=(x,t)\in\G,
\end{equation*}
be the \emph{height function}.
We let $\pi_\V\colon\G\to\V$, $\pi_\V(p)=\hgt(p)\nu$ for $p\in\G$, where, with a slight abuse of notation, we identify $\nu$ with $(\nu,0)\in\mathbb G$, be the  \emph{projection on~$\V$}. 
Moreover, we let $\pi_\W\colon\G\to\W$, uniquely given by the relation 
\begin{equation}
\label{eq:W_proj}
p=\pi_\W(p)\star\pi_\V(p)
\quad
\text{for}\ p\in\G,
\end{equation}
be the \emph{projection on $\W$}.
Using the shorthands $x^\pp=\hgt(p)\nu$ and $x^\perp=x-x^\pp$ for $x\in\R^{m_1}$, 
and exploiting~\eqref{eq:group_law_2} we easily get that, for $p=(x,t)\in\G$,  
\begin{equation*}
\pi_\V(p)
=
(x^\pp,0),
\quad
\pi_\W(p)
=
\left(
x^\perp,
t-\tfrac12\scalar*{\mat  x^\perp,x^\pp}
\right),
\end{equation*}
owing to the fact that $\scalar*{\mat y,y}=0$ for any $y\in\R^{m_1}$ by skew-symmetry.
Let us also observe that,
for $w\in\R^{n-1}$ and $s\in\R$, 
\begin{equation*}
w\star(s\mathrm \nu)
=
\exp(s\nu)(w)
\quad
\text{in}\ \G,
\end{equation*}
where, again with an abuse of notation, we identify $\nu$ with its associated left-invariant vector field. 
Finally, by definition, we can estimate
\begin{equation}
\label{eq:norm_V}
\|\pi_\V(p)\|_\infty
=
|\langle x,\nu\rangle|
\le
|x|
\le
\|p\|_\infty
\end{equation}
and, consequently, 
\begin{equation}
\label{eq:norm_W}
\|\pi_\W(p)\|_\infty
=
\|p\star\pi_\V(p)^{-1}\|_\infty
\le 
\|p\|_\infty+\|\pi_\V(p)^{-1}\|_\infty
=
\|p\|_\infty+\|\pi_\V(p)\|_\infty
\le 
2\|p\|_\infty.
\end{equation}
\subsection{Disks and cylinders}

We let 
\begin{equation*}
D_r=\set*{w\in\W : \|w\|_\infty<r}
\end{equation*}
be the \emph{open disk centered at $0\in\W$ of radius $r>0$}, and we set $D_r(w)=w\star D_r$ for any $w\in\W$.
Note that $\leb^{n-1}(D_r(w))=\leb^{n-1}(D_1)\,r^{Q-1}$ for all $r>0$ and $w\in\W$. 
We also let 
\begin{equation*}
C_r
=
D_r\star(-r,r)
=
\set*{w\star(s\mathrm \nu) : w\in D_r,\ s\in(-r,r)}
\end{equation*}
be the \emph{open cylinder with central section $D_r$ and height $2r$}, and we set $C_r(p)=p\star C_r$ for any $p\in\G$.
We also let
\begin{equation*}
A\star\R
=
\set*{w\star(s\mathrm \nu) : w\in A,\ s\in\R}
\end{equation*}
be the \emph{open infinite cylinder with central section $A\subset\W$}.
In virtue of~\eqref{eq:W_proj}, we have that 
\begin{equation*}
p\in C_r
\iff
\pi_\W(p)\in D_r,\
\hgt(p)\in(-r,r)
\iff
\|\pi_\W(p)\|_\infty<r,\
|\hgt(p)|<r
\end{equation*}
Thanks to the inequalities~\eqref{eq:norm_V} and~\eqref{eq:norm_W}, the left-invariant map $\|\cdot\|_C\colon\G\to[0,+\infty)$,
\begin{equation}
\label{eq:norm_C}
\|p\|_C=\max\set*{\|\pi_\W(p)\|_\infty,|\hgt(p)|}
\quad
\text{for}\
p\in\G,
\end{equation}
is a quasi-norm such that  $C_r=\set*{p\in\G : \|p\|_C<r}$ and
\begin{equation}
\label{eq:norms_inftyC}
\|p\|_C\le2\|p\|_\infty,
\quad
\|p\|_\infty\le2\|p\|_C,
\quad
\text{for}\ p\in\G.
\end{equation}
Consequently, $d_C\colon\G\times\G\to[0,+\infty)$, $d_C(p,q)=\|q^{-1}\star p\|_C$ for $p,q\in\G$, is a left-invariant quasi-distance on~$\G$ and  
\begin{equation}
\label{eq:inclusions_BC}
B_{r/2}(p)\subset C_r(p)\subset B_{2r}(p)
\quad
\text{for all}\
p\in\G,\ r>0.
\end{equation}

\subsection{Cylindrical excess}

A concept which plays a key role in the regularity theory of $(\Lambda,r_0)$-minimizers of the $\G$-perimeter is the \emph{cylindrical excess}, see~\cite{M12}*{Ch.~22} for the Euclidean setting  and~\cites{M14,MV15,M15,MS17} for the Heisenberg groups.

\begin{definition}[Cylindrical excess]
\label{def:excess}
The \emph{cylindrical excess} of a locally finite $\G$-perimeter set  $E\subset\G$ at $p\in\partial E$, at scale $r>0$, and with respect to the horizontal direction $\nu$, is 
\begin{equation*}
\begin{split}
\exc(E,p,r,\nu)
&=
\frac{1}{2\,r^{Q-1}}\int_{C_r(p)}
|\nu_E(p)-\nu|^2\di\mu_E(p)
\\
&=
\frac{1}{r^{Q-1}}\int_{C_r(p)\cap\partial^*E}
\left(1-\scalar*{\nu_E(p),\nu}^2\right)\di\shaus^{Q-1}_\infty(p).
\end{split}
\end{equation*} 
If no confusion arises, we set 
$\exc(p,r)=\exc(E,p,r)=\exc(E,p,r,\nu)$ and $\exc(r)=\exc(0,r)$.
\end{definition}
The basic properties of the cylindrical excess introduced in \cref{def:excess} can be plainly recovered from the corresponding ones known in the Euclidean setting, see~\cite{M12}*{Ch.~22}, and the Heisenberg groups, see~\cite{M14}*{Sec.~3} and~\cite{MV15}*{Sec.~3B}.
We omit the statements.
The following result corresponds to~\cite{MV15}*{Lem.~3.4 and Cor.~3.5}, which were stated in the setting of the Heisenberg groups $\mathbb H^n$, $n\ge2$ (also see~\cite{M12}*{Lem.~22.11} for the Euclidean case).
The very same results hold for any Carnot group of step $2$, with identical proof.

\begin{lemma}[Excess measure]
\label{res:exc_meas}
Let $E\subset\G$ be a set with locally finite $\G$-perimeter with $0\in\partial E$.
If there exists $s_0\in(0,1)$ such that 
\begin{equation*}
\sup\set*{|\hgt(p)| : p\in C_1\cap\partial E}
\le 
s_0,
\end{equation*} 
\begin{equation*}
\leb^{n-1}\left(\set*{p\in E\cap C_1 : \hgt(p)>s_0}\right)=0,
\end{equation*}
\begin{equation*}
\leb^{n-1}\left(\set*{p\in  C_1\setminus E : \hgt(p)<-s_0}\right)=0,
\end{equation*}
then, for a.e.\ $s\in(-1,1)$ and any $\phi\in C_c(D_1)$, letting
\begin{equation*}
M=C_1\cap\partial^*E,
\quad
M_s=M\cap\set*{\hgt>s},
\quad
E_s=\set*{w\in\W : w\star(s\mathrm \nu)\in E},
\end{equation*} 
we have
\begin{equation*}
\int_{E_s\cap D_1}\phi\di\leb^{n-1}
=
\int_{M_s}\phi\circ\pi_\W\,\scalar*{\nu_E,\nu}\di\shaus^{Q-1}_\infty.
\end{equation*}
Consequently, for any Borel set $G\subset D_1$,
\begin{equation*}
\leb^{n-1}(G)
=
\int_{M\cap\pi_\W^{-1}(G)}\scalar*{\nu_E,\nu}\di\shaus^{Q-1}_\infty,
\end{equation*}
\begin{equation}
\label{eq:small_height_est}
\leb^{n-1}(G)
\le 
\shaus^{Q-1}_\infty\left(M\cap\pi_\W^{-1}(G)\right).
\end{equation}
Moreover, we have 
\begin{equation*}
0\le 
\shaus^{Q-1}_\infty(M_s)-\leb^{n-1}(E_s\cap D_1)\le \exc(E,0,1)
\quad
\text{for a.e.}\
s\in(-1,1),
\end{equation*}
\begin{equation*}
\shaus^{Q-1}_\infty(M)-\leb^{n-1}(D_1)=\exc(E,0,1).
\end{equation*}
\end{lemma}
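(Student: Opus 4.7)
My plan is to follow the classical divergence-theorem argument behind~\cite{M12}*{Lem.~22.11} and its Heisenberg adaptation in~\cite{MV15}*{Lem.~3.4}, which transfers verbatim to any Carnot group of step~$2$: first establish the slice integration identity by testing against a carefully chosen horizontal vector field, then derive the remaining assertions as formal consequences.

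As a preliminary reduction, the stratified change of coordinates of \cref{coc} lets me assume $\nu=X_1$, so that $\hgt(p)=x_1$; the shift $p\mapsto p\star u\nu$ then affects only the first coordinate, giving $\pi_\W(p\star u\nu)=\pi_\W(p)$ and $\hgt(p\star u\nu)=\hgt(p)+u$. Consequently the Haar measure decomposes as $\di\leb^n=\di\leb^{n-1}(w)\,\di u$ along the factorization $p=w\star u\nu$, the first-layer field $X_1$ satisfies $X_1\hgt=1$, and $X_1$ annihilates any function of the form $\phi\circ\pi_\W$. For the slice identity, I fix $\phi\in C_c(D_1)$ and $\eta\in C^1_c((-1,1))$, and I test the distributional definition of $\mu_E$ against the horizontal field $\Phi=(\eta\circ\hgt)(\phi\circ\pi_\W)\,\nu$. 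The above remarks give $\diver_\G\Phi=(\eta'\circ\hgt)(\phi\circ\pi_\W)$, so Fubini rewrites the volume integral as $\int_\R\eta'(u)g(u)\,\di u$ with $g(u)=\int_{E_u\cap D_1}\phi\,\di\leb^{n-1}$. Choosing $\eta=\eta_\varepsilon$ smoothly approximating $\chi_{(s,1)}$, Lebesgue differentiation yields $\int\eta_\varepsilon'g\to g(s)-g(1)$ at almost every $s\in(-1,1)$, and the second height hypothesis forces $g(1)=0$; on the boundary side, the first height hypothesis confines $\partial^* E\cap C_1$ to the slab $\{|\hgt|\le s_0\}$, so dominated convergence identifies the limit of the boundary integral with $\int_{M_s}\phi\circ\pi_\W\scalar*{\nu_E,\nu}\,\di\shaus^{Q-1}_\infty$, yielding the slice identity.

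The remaining statements reduce to formal corollaries. Specializing to any $s\in(-1,-s_0)$, the height hypotheses give $M_s=M$ and $E_s\cap D_1=D_1$, so $\int_{D_1}\phi\,\di\leb^{n-1}=\int_M\phi\circ\pi_\W\scalar*{\nu_E,\nu}\,\di\shaus^{Q-1}_\infty$ for every $\phi\in C_c(D_1)$; approximating $\chi_G$ by nonnegative continuous compactly supported functions and invoking monotone convergence yields the Borel identity, from which \eqref{eq:small_height_est} follows at once from $|\scalar*{\nu_E,\nu}|\le 1$. Combining the slice identity and the Borel identity produces
\begin{equation*}
\shaus^{Q-1}_\infty(M_s)-\leb^{n-1}(E_s\cap D_1)=\int_{M_s}\bigl(1-\scalar*{\nu_E,\nu}\bigr)\,\di\shaus^{Q-1}_\infty,
\end{equation*}
which is nonnegative (lower bound) and, as an integral over $M_s\subset M$ of a nonnegative integrand dominated by the excess density, at most $\exc(E,0,1)$; taking $s\in(-1,-s_0)$ specializes to the global equality.

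The technical heart of the argument is the simultaneous passage to the limit $\eta_\varepsilon\to\chi_{(s,1)}$: I must pick $s$ to be a Lebesgue point of the slice function $g$, exploit the second height hypothesis to kill the boundary term at $u=1$, and exploit the first height hypothesis to confine the limiting boundary integrand to $M_s$ rather than to a larger slice of $\partial^* E$ inside the cylinder.
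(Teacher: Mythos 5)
Your proof follows exactly the route the paper itself points to (it defers to \cite{MV15}*{Lem.~3.4 and Cor.~3.5} ``with identical proof''): test the Gauss--Green identity with the field $(\eta\circ\hgt)(\phi\circ\pi_\W)\,\nu$, use the unit-Jacobian factorization $\di\leb^n=\di\leb^{n-1}(w)\di u$ along $p=w\star u\nu$ together with $X_1\hgt=1$ and $X_1(\phi\circ\pi_\W)=0$, pass $\eta_\varepsilon\to\chi_{(s,1)}$ at Lebesgue points of the slice function $g$, and read off the corollaries. The analytic ingredients are all correct, including the use of the second height hypothesis to kill the ramp at $u=1$ and of the first to identify the limiting boundary set with $M_s$.

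There is, however, one step you must make explicit, because as written your computation does not reproduce the stated identity under the paper's own conventions: the sign of the boundary term. The paper defines $\mu_E$ by $\int_E\diver_\G\phi\di x=-\int\scalar*{\phi,\di\mu_E}$ with $\nu_E$ the \emph{inner} normal, so your test field yields
\begin{equation*}
\int_{E_s\cap D_1}\phi\di\leb^{n-1}
=
-\int_{M_s}\phi\circ\pi_\W\,\scalar*{\nu_E,\nu}\di\shaus^{Q-1}_\infty,
\end{equation*}
with a minus sign that you drop without comment. The statement as printed is consistent only with the opposite (outer-normal, plus-sign) convention of~\cite{M14}: with the inner-normal convention the zero-excess model is $E=\set*{\hgt>0}$ (see \cref{res:flat}), whereas the height hypotheses of the lemma describe $E$ as essentially $\set*{\hgt<0}$. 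This is an inconsistency of the paper's conventions rather than a defect of your strategy, but a complete proof must declare which convention it uses and carry the sign through; otherwise the identity $\leb^{n-1}(G)=\int_{M\cap\pi_\W^{-1}(G)}\scalar*{\nu_E,\nu}\di\shaus^{Q-1}_\infty$ and the comparison of $\int_{M_s}(1-\scalar*{\nu_E,\nu})\di\shaus^{Q-1}_\infty$ with the excess come out reversed. Two smaller points: for a general Borel set $G\subset D_1$ one cannot approximate $\chi_G$ monotonically from within $C_c(D_1)$; instead, conclude that the two Radon measures $\leb^{n-1}\llcorner D_1$ and $(\pi_\W)_{\#}\big(\scalar*{\nu_E,\nu}\,\shaus^{Q-1}_\infty\llcorner M\big)$ coincide because they agree on all of $C_c(D_1)$. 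Finally, for the upper bound in the ``Moreover'' part, use $\exc(E,0,1)=\tfrac12\int_M|\nu_E-\nu|^2\di\shaus^{Q-1}_\infty=\int_M\big(1-\scalar*{\nu_E,\nu}\big)\di\shaus^{Q-1}_\infty$ (the first expression in \cref{def:excess}), since the pointwise bound $1-\scalar*{\nu_E,\nu}\le 1-\scalar*{\nu_E,\nu}^2$ fails wherever $\scalar*{\nu_E,\nu}<0$.
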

\color{black}

\section{Plentiful groups}
\label{subsec:plentiful}

Contrarily to what happens in $\R^n$, the fact that $\exc(E,p,r)=0$ for some $p\in\partial E$ and $r>0$ does not necessarily imply that $\partial E$ is flat in a neighborhood of~$p$.
This indeed happens in the first Heisenberg group~$\mathbb H^1$, see the example in~\cite{MSV08}*{Th.~1.5} and the characterization provided by~\cite{M14}*{Prop.~3.7}.
Nevertheless, this is not the case for any Heisenberg group $\mathbb H^n$ with $n\ge2$, as proved in~\cite{M14}*{Prop.~3.6}.
Consequently, in order to avoid minimal surfaces with zero excess that are not flat, we need to restrict our attention to a special class of Carnot groups, defined as follows.
\begin{definition}[Plentiful group]
\label{def:plentiful}
We say that a Carnot group~$\G$ of step~$2$ is \emph{plentiful} if any $V\subset V_1$ with $\dim V=m_1-1$ satisfies $[V,V]=V_2$.
\end{definition}
The property of being plentiful is well behaved with respect to Lie group isomorphisms.
\begin{proposition}
\label{res:plentiful_isom}
Let $\G_1$ and $\G_2$ be two Carnot groups of step $2$.
If $\G_1$ is plentiful and $\phi\colon\G_1\to\G_2$ is a Lie groups isomorphism, then also $\G_2$ is plentiful. 
\end{proposition}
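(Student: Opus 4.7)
The plan is to transfer plentifulness along the Lie algebra isomorphism induced by $\phi$, after observing that the property is independent of the specific choice of first layer as a complement to $V_2$. Since $\G_1$ and $\G_2$ are connected and simply connected, $\phi$ induces a Lie algebra isomorphism $\psi = d\phi_0\colon \mathfrak{g}_1 \to \mathfrak{g}_2$. Writing $V_1^{(i)}\oplus V_2^{(i)}$ for the stratification of $\mathfrak{g}_i$, and using that in step~$2$ the second layer is canonical, namely $V_2^{(i)} = [\mathfrak{g}_i, \mathfrak{g}_i]$, one obtains at once $\psi(V_2^{(1)}) = V_2^{(2)}$. The first layer is however merely a choice of complement, so setting $\tilde V := \psi(V_1^{(1)})$, one has $\mathfrak{g}_2 = \tilde V \oplus V_2^{(2)}$ and $[\tilde V, \tilde V] = \psi([V_1^{(1)}, V_1^{(1)}]) = V_2^{(2)}$, but in general $\tilde V \neq V_1^{(2)}$.

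The key intermediate step I would establish is that plentifulness does not depend on the choice of complement to $V_2$ in $\mathfrak{g}$. If $\tilde V$ and $V_1^{(2)}$ are two such complements, the projection $\pi\colon \tilde V \to V_1^{(2)}$ along $V_2^{(2)}$ is a linear isomorphism; and since $[\cdot, V_2^{(2)}] = 0$ by step~$2$, one has $[u,v] = [\pi(u), \pi(v)]$ for every $u,v \in \tilde V$. Hence $W \mapsto \pi(W)$ is a bijection between codimension $1$ subspaces of $\tilde V$ and those of $V_1^{(2)}$ that preserves the commutator $[W,W]$. In particular, requiring $[W,W] = V_2^{(2)}$ for every codimension $1$ subspace $W \subset \tilde V$ is equivalent to the same condition with $V_1^{(2)}$ in place of $\tilde V$.

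Putting these pieces together, to prove plentifulness of $\G_2$ with its original $V_1^{(2)}$, I would take an arbitrary codimension $1$ subspace $V \subset V_1^{(2)}$, find the corresponding codimension $1$ subspace $W = \pi^{-1}(V) \subset \tilde V$ with $[W,W] = [V,V]$, observe that $\psi^{-1}(W) \subset V_1^{(1)}$ is of codimension $1$ so that $[\psi^{-1}(W), \psi^{-1}(W)] = V_2^{(1)}$ by plentifulness of $\G_1$, and finally apply $\psi$ to obtain $[W,W] = V_2^{(2)}$, whence $[V,V] = V_2^{(2)}$. The only delicate point in this scheme is precisely the change-of-complement lemma in the second paragraph: a Lie group isomorphism between Carnot groups need not respect the chosen stratifications, and this is what forces one to pass through an auxiliary complement. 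All remaining steps are then routine diagram chasing through $\psi$.
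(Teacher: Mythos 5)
Your proof is correct, and on the one delicate point it is actually more careful than the paper's own argument. The paper follows the same overall strategy of transferring plentifulness through $d\phi$, but it simply asserts that $d\phi$ ``preserves the stratification'' and then pulls an arbitrary codimension-one subspace $W\subset W_1$ back to $(d\phi)^{-1}(W)\subset V_1$; this pullback lands in $V_1$ only if $d\phi$ really maps first layer to first layer. As you point out, only the second layer $[\mathfrak g,\mathfrak g]$ is canonical: a Lie algebra isomorphism --- even an automorphism, e.g.\ $X\mapsto X+T$, $Y\mapsto Y$, $T\mapsto T$ in the first Heisenberg algebra --- need not respect the chosen first layer, so the paper's step is not justified as written for a general Lie group isomorphism. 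Your change-of-complement lemma, namely that in step $2$ the bracket only sees the projection onto any fixed complement of the center because $[\,\cdot\,,V_2]=0$, is precisely what closes this gap; it shows that plentifulness is a property of the pair $(\mathfrak g,[\mathfrak g,\mathfrak g])$ rather than of the particular stratification, after which the remaining bookkeeping (transporting $W=\pi^{-1}(V)$ through $\psi$ and invoking plentifulness of $\G_1$) coincides with the paper's computation. In short: same route, but your version supplies the justification the paper omits, and I would keep your intermediate lemma explicit.
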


\begin{proof}
Set $\mathfrak g_1=V_1\oplus V_2$ and $\mathfrak g_2=W_1\oplus W_2$, with $V_2=[V_1,V_1]$ and $W_2=[W_1,W_1]$.
Note that $\mathrm{d}\phi\colon\mathfrak g_1\to\mathfrak g_2$ is an isomorphism preserving the stratification of the corresponding algebras.
Hence, letting $W\subset W_1$ be as in \cref{def:plentiful} for $\G_2$, $V=(d\phi)^{-1}(W)$ is an $(m-1)$-dimensional vector subspace of $V_1$. Thus, since $\G_1$ is plentiful, we get that
\begin{equation*}
[W,W]=[d\phi (V),d\phi (V)]
=
d\phi([V,V])
=
d\phi (V_2)
=
W_2,
\end{equation*}
proving that also $\G_2$ is plentiful.
\end{proof}

We observe that the first Heisenberg group $\mathbb H^1$ is not plentiful. 
More generally, every \emph{free} Carnot group of step $2$ (see~\cite{BLU07}*{Sec.~3.3} for the precise definition) is not plentiful.
On the other hand, the Heisenberg group $\mathbb H^n$ is plentiful for any $n\geq 2$.
More in general, we have the following result.
\begin{theorem}
\label{res:H-type}
An $H$-type group is plentiful if and only if it is not isomorphic to $\mathbb H^1$.
\end{theorem}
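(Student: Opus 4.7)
The plan is to translate plentifulness into an algebraic condition using the $H$-type map. On an $H$-type group the pairing $\langle J_Z X,Y\rangle=\langle Z,[X,Y]\rangle$ defines a linear map $J\colon V_2\to\mathrm{End}(V_1)$ satisfying $J_Z^2=-|Z|^2\,\mathrm{Id}_{V_1}$. Two consequences are immediate: $J_Z$ is a linear isomorphism of $V_1$ whenever $Z\neq 0$, and taking $Z$ of unit length exhibits a complex structure on $V_1$, forcing $m_1=\dim V_1$ to be even.

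Given a codimension-$1$ subspace $V\subset V_1$, I would rewrite the failure of the identity $[V,V]=V_2$ as follows: $[V,V]\subsetneq V_2$ if and only if there exists $Z\in V_2\setminus\{0\}$ with $\langle Z,[X,Y]\rangle=0$ for every $X,Y\in V$, equivalently $J_Z(V)\subset V^\perp$. Since $J_Z$ is invertible we have $\dim J_Z(V)=m_1-1$, whereas $\dim V^\perp=1$, so such a $Z$ can exist only when $m_1\le 2$. Hence every $H$-type group with $m_1\ge 3$ is plentiful, and since $m_1$ is even the only potentially non-plentiful case is $m_1=2$.

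It then remains to show that $m_1=2$ forces $\G\cong\mathbb H^1$. When $m_1=2$, the space $\Lambda^2 V_1$ is one-dimensional, so the alternating bracket $V_1\times V_1\to V_2$ has image of dimension at most $1$; combined with $V_2=[V_1,V_1]$ and the step-$2$ hypothesis this gives $m_2=1$. A step-$2$ Carnot group with $(m_1,m_2)=(2,1)$ is isomorphic to $\mathbb H^1$, and by \cref{res:plentiful_isom} plentifulness is preserved by isomorphism; combined with the already-noted fact that $\mathbb H^1$ is not plentiful (any line $V\subset V_1$ gives $[V,V]=0\neq V_2$), this closes the equivalence. I do not foresee a real obstacle: the $J$-map reformulation plus a one-line dimension count does essentially all the work, and only the classification of $(2,1)$ step-$2$ groups as $\mathbb H^1$ requires standard Lie-algebraic uniqueness.
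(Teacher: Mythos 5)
Your proof is correct, and it takes a genuinely different route from the paper's. The paper argues constructively: it first notes that $X,J_{T_1}(X),\dots,J_{T_{m_2}}(X)$ is an orthonormal family, so $m_1\ge m_2+1$, and then splits into the cases $m_1>m_2+1$ and $m_1=m_2+1$, in each case exhibiting explicit pairs $w,J_{T_j}(w)\in V$ with $[w,J_{T_j}(w)]=T_j$ (the second case invoking the characterization of $H$-type via the surjective isometry $\ad_X\colon\ker(\ad_X)^\perp\cap V_1\to V_2$). Your argument replaces all of this with a single duality observation: $[V,V]\subsetneq V_2$ is equivalent to the existence of $Z\ne0$ with $J_Z(V)\subset V^\perp\cap V_1$, and since $J_Z$ is injective this forces $m_1-1\le 1$, so only $m_1=2$ can fail, which is exactly $\mathbb H^1$. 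This avoids the case distinction entirely and is shorter and more conceptual, at the cost of being non-constructive (it does not exhibit generators of $V_2$ inside $[V,V]$). Two small points you should make explicit: (i) the identity $J_Z^2=-|Z|^2\,\mathrm{Id}_{V_1}$ follows from the paper's definition (orthogonality of $J_Z$ for $|Z|=1$) combined with the skew-symmetry $\langle J_ZX,Y\rangle=-\langle J_ZY,X\rangle$, which is immediate from the antisymmetry of the bracket, together with linearity in $Z$; (ii) the evenness of $m_1$ is not actually needed, since $m_1=1$ is already excluded by the step-$2$ hypothesis, so the dimension count alone isolates $m_1=2$. The concluding classification of step-$2$ groups with $(m_1,m_2)=(2,1)$ as $\mathbb H^1$ and the appeal to \cref{res:plentiful_isom} are fine.
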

We recall that a Carnot group $\mathbb G$ of step $2$ is of \emph{$H$-type} if, for any $Z\in V_2$, the map $J_Z\colon V_1\to V_1$ given by
\begin{equation}
\label{eq:H-type}
\langle J_Z(X),Y\rangle=\langle Z,[X,Y]\rangle
\quad
\text{for any}\ X,Y\in V_1 
\end{equation}
is orthogonal whenever $|Z|=1$.
Notice that $\mathbb H^n$ is of $H$-type for all $n\ge1$.

\begin{proof}[Proof of \cref{res:H-type}]
Let $T_1,\ldots,T_{m_2}$ be an orthonormal basis of $V_2$ and let $X\in V_1$. 
By~\cite{BLU07}*{Prop.~18.1.8}, for any $X\in V_1$, it holds that $X,J_{T_1}(X),\dots,J_{T_{m_2}}(X)$ is an orthonormal subfamily of $V_1$, hence yielding that $m_1\geq m_2+1$. Fix $V\subset V_1$ as in \cref{def:plentiful} and let $v\in V_1\cap V^\perp$ be such that $|v|=1$.
We now distinguish two cases. 

\vspace{1ex}

\textit{Case~1}.
Let us assume that $m_1>m_2+1$. 
In view of~\eqref{eq:H-type} and~\cite{BLU07}*{Prop.~18.1.8}, $J_{T_1}(v),\dots,J_{T_{m_2}}(v)$ is hence an orthonormal subfamily of~$V$. Moreover, again owing to the fact that $m_1>m_2+1$, there exists $w\in V$ which is orthogonal to $J_{T_1}(v),\dots,J_{T_{m_2}}(v)$ and satisfies $|w|=1$. Again by~\cite{BLU07}*{Prop.~18.1.8}, we get
\begin{equation*}
    \left\langle v,J_{T_j}(w)\right\rangle=-\left\langle w,J_{T_j}(v)\right\rangle=0
\end{equation*}
for any $j=1,\ldots,m_2$, which implies that $J_{T_j}(w)\in V$ for any $j=1,\ldots,m_2$. Since $[w,J_{T_j}(w)]=T_j$ for each $j=1,\dots,m_2$ by~\eqref{eq:H-type}, we conclude that $[V,V]=V_2$, as desired.

\vspace{1ex}

\textit{Case~2}.
Now assume that $m_1=m_2+1$. 
We can assume that $m_1>2$, since otherwise $\G$ is isomorphic to $\mathbb H^1$. 
We recall that $\G$ is of $H$-type if and only if, for any $X\in V_1$ with $|X|=1$, the map $\ad_X =[X,\,\cdot\,]$ is a surjective isometry from $\ker(\ad_X)^\perp\cap V_1$ to $V_2$, see~\cites{K80,CDKR91}. 
Since $m_1=m_2+1$, we infer that $\ker(\ad_X)^\perp\cap V_1=X^\perp\cap V_1$. Let $X\in V$ be such that $|X|=1$. 
By the previous considerations, $\dim(\ad_X(V\cap X^\perp))=m_2-1$. Let $T\in V_2\cap \ad_X(V\cap X^\perp)^\perp$ be such that $|T|=1$. Since $[X,J_T(X)]=T$ and $\ad_X$ is injective, we infer that, up to a sign, $v=J_T(X)$. Since $m_1>2,$ and hence $\dim(V)>1$, let $Y\in V$ be such that $|Y|=1$ and $\langle X,Y\rangle=0$. By~\cite{K80}, we infer that 
\begin{equation*}
    \left\langle J_T(Y),v\right\rangle=\left\langle J_T(Y),J_T(X)\right\rangle=-\left\langle Y,J_T^2(X)\right\rangle=\langle Y,X\rangle=0,
\end{equation*}
and so $J_T(Y)\in V$. 
Since $[Y,J_T(Y)]=T$, we get $[V,V]=V_2$, concluding the proof.
\end{proof}

We point out that the class of plentiful groups is broader than that of $H$-type groups.

\begin{example}\label{example}
Consider the stratified Lie algebra $\mathfrak g_{7,5,2}$ of dimension~$7$, rank~$5$ and step~$2$, with only non-trivial commutation relations given by
\begin{equation*}    
[X_1,X_2]=[X_3,X_4]=T_1,\quad[X_1,X_5]=[X_2,X_3]=T_2
\end{equation*}
(for a construction, see~ \cite{LT22}*{(27B)}).
Let $\G_{7,5,2}$ be its associated Carnot group. 
In view of ~\cite{BLU07}*{Prop.~18.1.5}, $\G_{7,5,2}$ is not of $H$-type. 

We claim that $\G_{7,5,2}$ is plentiful. 
To this aim, let us fix $V\subset V_1$ as in \cref{def:plentiful} and let $v\in V_1\cap V^\perp$ be such that $|v|=1$. We let $v=\sum_{j=1}^5a_jX_j$, where $a_j=\langle v,X_j\rangle$. 
We now observe that $W_j=X_j-a_jv\in V$ for $j=1,\dots,5$ are such that 
\begin{equation}\label{sommacom}
[W_1,W_4]+[W_2,W_3]
=
\alpha T_2,
\quad
\text{with}\
\alpha=
\left(a_1^2+\left(a_4^2-a_4a_5+a_5^2\right)\right)\ge0,
\end{equation}
and 
\begin{equation}
\label{eq:treno}
\begin{split}
[W_1,W_2]
&=
\left(1-a_1^2-a_2^2\right)
T_1
+
\left(a_1a_3-a_2a_5\right)
T_2,
\\[1ex]
[W_1,W_5]&
=
-a_2a_5\,T_1
+
\left(1-a_1^2-a_5^2\right)
T_2,
\\[1ex]
[W_3,W_4]
&=
\left(1-a_3^2-a_4^2\right)
T_1
+
a_2a_4\,T_2.
\end{split}
\end{equation}
We now distinguish two cases, depending on whether $\alpha=0$ or $\alpha>0$ in~\eqref{sommacom}.

If $\alpha=0$, then $a_1=a_4=a_5=0$.
Due to~\eqref{eq:treno}, we get $[W_1,W_5]=T_2$, $[W_1,W_2]=a_3^2\,T_1$ and $[W_3,W_4]=a_2^2\,T_1$, proving the claim, since either $a_2\neq 0$ or $a_3\neq 0$.

If $\alpha>0$ instead, then $T_2\in [V,V]$ by~\eqref{sommacom}.
Therefore, by~\eqref{eq:treno}, we get $(1-a_3^2-a_4^2)T_1\in V$ and $(1-a_1^2-a_2^2)T_1\in V$. 
If $a_3^2+a_4^2\ne1$, then $T_1\in V$. 
If $a_3^2+a_4^2=1$ instead, then $a_1=a_2=0$ and so $T_1\in V$, proving the claim.
\end{example}

\color{black}
Our interest for plentiful groups is encoded in the following result, which is a sort of localized version of~\cite{FSS03}*{Lem.~3.6}.
This result is essential in the proof of  \cref{res:flat}, where we prove that plentiful groups do not admit non-flat surfaces with zero excess. 

\begin{lemma}
\label{res:flat_level_sets} 
Let $\G$ be a plentiful Carnot group.
Let $\Omega\subset\G$ be a non-empty connected open set and let $Z_1,\dots,Z_{m_1}$ be an orthonormal basis of $V_1$.  
If $f\in L^1_\loc(\G)$ is such that $Z_1f\ge0$ and $Z_if=0$ for $i=2,\dots,m_1$ in $\Omega$, then the level sets of $f$ in $\Omega$ coincide with left translations of $\set*{p\in\G : \scalar*{p,Z_1(0)}=0}$.
\end{lemma}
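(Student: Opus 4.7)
The strategy proceeds in three steps: (i) use the plentiful hypothesis together with a bracket computation to upgrade the vanishing of the horizontal derivatives $Z_if=0$ ($i\geq 2$) to vanishing of all vertical derivatives $T_\ell f=0$ on $\Omega$; (ii) by a Frobenius-type argument deduce that $f$ is locally constant along the leaves of the foliation by left cosets of $\W:=\exp(\spann\{Z_2,\dots,Z_{m_1}\}\oplus V_2)$; (iii) exploit $Z_1f\geq 0$ to obtain monotonicity in the transverse direction and identify the level sets as pieces of translated hyperplanes.

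For step (i), since $Z_if=Z_jf=0$ as distributions on $\Omega$ for $i,j\in\{2,\dots,m_1\}$, the bracket identity gives $[Z_i,Z_j]f=Z_i(Z_jf)-Z_j(Z_if)=0$ distributionally on $\Omega$. As the subspace $V:=\spann\{Z_2,\dots,Z_{m_1}\}\subset V_1$ has codimension one, by \cref{def:plentiful} we have $[V,V]=V_2$; hence each $T_\ell$ is a linear combination of brackets $[Z_i,Z_j]$ with $i,j\geq 2$, and applying this relation to $f$ yields $T_\ell f=0$ in $\Omega$ for every $\ell=1,\dots,m_2$. For step (ii), the $(n-1)$-dimensional left-invariant distribution spanned by $Z_2,\dots,Z_{m_1},T_1,\dots,T_{m_2}$ is involutive (the non-zero brackets $[Z_i,Z_j]$ lie in $V_2$, hence in the distribution, while the remaining brackets all vanish by the step-two structure), and its integral leaves are the left cosets of $\W$; in the exponential coordinates adapted to $Z_1,\dots,Z_{m_1},T_1,\dots,T_{m_2}$ one checks $\W=\{p\in\G:\langle p,Z_1(0)\rangle=0\}$. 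Since all vector fields tangent to this foliation annihilate $f$ distributionally, a standard mollification argument---locally flattening the foliation and using $\partial_{y_k}(f*\rho_\varepsilon)=0$ for $k\geq 2$---shows that $f$ is constant on each connected component of $L\cap\Omega$ for every leaf $L$.

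For step (iii), parametrize a small neighborhood $U\Subset\Omega$ of any $p_0\in\Omega$ by $(s,w)\mapsto w\star(sZ_1(0))$, $(s,w)\in(-\varepsilon,\varepsilon)\times U'\subset\R\times\W$. By step (ii), $f$ descends to a function $g_U(s)$ depending only on $s$; since the $s$-curves are integral curves of $Z_1$, the hypothesis $Z_1f\geq 0$ becomes $g_U'\geq 0$ distributionally, so $g_U$ is monotone nondecreasing. Consequently the level sets of $f$ in $U$ are pieces of left translates of $\W$, and the connectedness of $\Omega$ lets us glue these local profiles into a single monotone profile, giving the claim. The main technical delicacy is the bracket computation of step (i): for $f\in L^1_{\loc}$, one must interpret $Z_iZ_jf$ as an iterated distributional derivative, which is legitimate precisely because $Z_jf$ is already the zero distribution on $\Omega$; a secondary subtlety is the gluing in step (iii), where the connectedness of $\Omega$ is essential in order to rule out distinct monotone profiles on different components.
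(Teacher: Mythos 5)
Your proposal is correct and follows essentially the same route as the paper: use the plentiful hypothesis and the bracket identity to upgrade $Z_if=0$ ($i\ge2$) to $Tf=0$ for all $T\in V_2$, then invoke involutivity of the resulting codimension-one left-invariant distribution to see that $f$ is constant on the left cosets of $\set*{p\in\G : \scalar*{p,Z_1(0)}=0}$. Your treatment is in fact slightly more explicit than the paper's (which simply reduces to $f\in C^\infty$ "by approximation" and omits the monotonicity discussion for $Z_1f\ge0$), but the underlying argument is identical.
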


\begin{proof}
We can assume $f\in C^\infty(\G)$, since the general case can be recovered by approximation.
Clearly, $Z_iZ_jf=0$ for all $i,j=2,\dots,m_1$ and thus, since $\G$ is plentiful, $Tf=0$ in $\Omega$ for any $T\in V_2$.
Since the left-invariant distribution $\mathscr D$ generated by the vector fields $\mathfrak g\setminus\spann\set*{Z_1}$ is involutive, $\G$ is foliated by smooth $(n-1)$-dimensional manifolds tangent to $\mathscr D$ which, in $\Omega$, coincide with the level sets of $f$.
Since $Z_1,\dots,Z_{m_1}$ are orthonormal and left-invariant, each leaf of the foliation coincides with the leaf passing through $0\in\G$, that is, $\set*{p\in\G : \scalar*{p,Z_1(0)}=0}$, up to a left translation.
\end{proof}

The following crucial result extends~\cite{M14}*{Prop.~3.6} to plentiful groups.
We notice that \cref{res:flat} below can be achieved as~\cite{M14}*{Prop.~3.6} by a straightforward adaptation of~\cite{M14}*{Lem.~3.5}.
However, we prove \cref{res:flat} via a different and plainer argument, somewhat reminiscent of the proof of~\cite{FSS03}*{Claim~3.7}, by exploiting  \cref{res:flat_level_sets}.
\begin{theorem}[Locally constant normal]
\label{res:flat}
Let $\G$ be a plentiful Carnot group. Let $E\subset\G$ be a set with finite $\G$-perimeter in $B_r(p)$, for $p\in\partial E$ and $r>0$.
If $\nu_E(q)=\nu$ for $\mu_E$-a.e.\ $q\in B_r(p)$, then
\begin{equation*}
E\cap B_r(p)
=
\set*{q\in B_r(p) : \hgt(q)>\hgt(p)}
\end{equation*}
up to $\leb^n$-negligible sets.
\end{theorem}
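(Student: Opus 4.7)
Without loss of generality, by the stratified change of coordinates of \cref{coc}, I choose an adapted orthonormal basis $X_1,\dots,X_{m_1},T_1,\dots,T_{m_2}$ of $\mathfrak g$ with $X_1=\nu$. In these coordinates the height function is simply $\hgt(q)=q_1$, and the homogeneous norm $\|\cdot\|_\infty$, the perimeter, the measure $\mu_E$, and the normal $\nu_E$ are all unaffected by this rotation within $V_1$. The idea is then to encode the hypothesis $\nu_E\equiv\nu$ as a sign/vanishing condition on the horizontal distributional derivatives of $\chi_E$ and invoke \cref{res:flat_level_sets}.

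\emph{Step 1 (distributional derivatives of $\chi_E$).} Since each left-invariant vector field $X_i$ is divergence-free with respect to the Haar measure $\leb^n$, integration by parts combined with the defining identity of $\mu_E$ yields
\begin{equation*}
X_i\chi_E \;=\; \nu_{E,i}\,\abs{\mu_E}
\qquad
\text{as distributions on } B_r(p),\ i=1,\dots,m_1.
\end{equation*}
The hypothesis $\nu_E\equiv\nu=\mathrm{e}_1$ $\mu_E$-a.e.\ in $B_r(p)$ therefore gives $X_1\chi_E=|\mu_E|\ge 0$ and $X_i\chi_E=0$ for $i=2,\dots,m_1$ as distributions on $B_r(p)$.

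\emph{Step 2 (apply \cref{res:flat_level_sets}).} I apply the lemma to $f=\chi_E$ on the connected open set $\Omega=B_r(p)$. Since $\chi_E$ is only $L^1_\loc$, I mollify with a smooth kernel so that $\chi_E\ast\rho_\varepsilon$ satisfies $X_1(\chi_E\ast\rho_\varepsilon)\ge 0$ and $X_i(\chi_E\ast\rho_\varepsilon)=0$ on the slightly shrunken ball $B_{r-\varepsilon}(p)$; the smooth version of the lemma applies there, and the foliation conclusion is stable as $\varepsilon\to 0^+$. A direct calculation with the group law \eqref{eq:group_law_2} shows that left translates of $\{q\in\G:\scalar*{q,X_1(0)}=0\}$ are exactly the Euclidean affine hyperplanes $\{q\in\G:q_1=c\}$, because left translation acts as $q_1\mapsto q_1+a_1$ on the first coordinate. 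Hence $\chi_E$ depends only on $q_1=\hgt(q)$ in $B_r(p)$.

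\emph{Step 3 (conclude).} On functions of $q_1$ alone the vector field $X_1$ reduces to $\partial_{q_1}$, so $X_1\chi_E\ge 0$ forces $\chi_E$ to be non-decreasing in $q_1$ on $B_r(p)$. Being $\{0,1\}$-valued, there is a threshold $c_\ast\in\R$ with
\begin{equation*}
E\cap B_r(p)=\set*{q\in B_r(p):\hgt(q)>c_\ast}
\end{equation*}
up to $\leb^n$-null sets. The hypothesis $p\in\partial E$, read through the measure-theoretic definition \eqref{eq:boundary}, prevents either $E$ or $E^c$ from exhausting any neighborhood of $p$, which pins $c_\ast=\hgt(p)$. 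The main technical difficulty is precisely this passage from the smooth version of \cref{res:flat_level_sets} to the non-smooth $f=\chi_E$ by convolution, since one must ensure that both the sign constraint on $X_1 f$ and the vanishing of $X_i f$ for $i\ge 2$ survive the mollification on an exhausting family of subsets of $B_r(p)$; a secondary, mild point is the identification of left translates of the hyperplane $\{q_1=0\}$ with the Euclidean hyperplanes $\{q_1=c\}$, which is immediate from \eqref{eq:group_law_2} but worth recording explicitly.
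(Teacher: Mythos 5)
Your proposal is correct and follows essentially the same route as the paper: translate the hypothesis $\nu_E\equiv\nu$ into the distributional conditions $X_1\chi_E\ge 0$ and $X_i\chi_E=0$ for $i\ge 2$, apply \cref{res:flat_level_sets} to $f=\chi_E$ (the mollification you flag is absorbed into that lemma's own proof by approximation), and pin down the threshold via the measure-theoretic boundary condition $p\in\partial E$. The only cosmetic difference is that the paper tests with $\phi=\zeta\psi$ for a general $\zeta\in\R^{m_1}$ rather than fixing the adapted basis first, which amounts to the same computation.
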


\begin{proof}
We can clearly assume that $p=0$ up to a translation.
Take $\zeta\in\R^{m_1}$ and consider the left-invariant differential operator 
$L_\zeta
=
\sum_{j=1}^{m_1}\zeta_jX_j$
and the test horizontal vector field $\phi=\zeta\psi\in C^1_c(B_r;\R^{m_1})$ for some arbitrary $\psi\in C^1_c(B_r;\R)$.
By assumption, we can compute
\begin{align*}
\int_EL_\zeta\psi\di\leb^n
=
\int_E\diver_\G\phi\di\leb^n
=
-
\int_{B_r}\scalar*{\phi,\nu_E}\di\mu_E
=
-\int_{B_r}\psi\,\scalar*{\zeta,\nu}\di\mu_E,
\end{align*}
yielding that $L_\zeta\mathbf{1}_E=0$ if $\scalar*{\zeta,\nu}=0$ and $L_\zeta\mathbf{1}_E\ge0$ if $\zeta=\nu$ in $B_r$.
By \cref{res:flat_level_sets}, 
\begin{equation*}
E\cap B_r
=
\tau_q\left(\set*{\tilde q\in\G : \hgt(\tilde q)>0}\right)\cap B_r
\quad
\text{for some}\ q\in\G.
\end{equation*}
To conclude, we just need to show that $\hgt(q)=0$, as this yields 
\begin{equation*}
\tau_q\left(\set*{\tilde q\in\G : \hgt(\tilde q)>0}\right)
=
\set*{\tilde q\in\G : \hgt(\tilde q)>0}.
\end{equation*}
Indeed, if $\hgt(q)>0$, then $B_\rho\cap \tau_q\left(\set*{\tilde q\in\G : \hgt(\tilde q)>0}\right)=\emptyset$ for some $\rho\in(0,r)$, yielding
\begin{equation*}
|B_\rho\cap E|
=
\left|B_\rho\cap \tau_q\left(\set*{\tilde q\in\G : \hgt(\tilde q)>0}\right)\right|
=
0,
\end{equation*}
against the assumption that $0\in\partial E$, recall~\eqref{eq:boundary}.
The case $\hgt(q)<0$ can be similarly addressed by considering $E^c$ in place of $E$. 
The proof is complete.
\end{proof}
\color{black}
\section{Intrinsic cones, Lipschitz graphs and area formula}
\label{sec:ilip}

Throughout this section, we assume that $(\G,\star)$ is a Carnot group of step $2$ as in \cref{subsec:step_2}.
For a general introduction about the topics of this section, we refer to~\cite{FS16}. Moreover, here and for the rest of the paper, we fix an horizontal direction $\nu$ and we choose an adapted basis $\nu=X_1,X_2,\ldots,X_{m_1},T_1,\ldots,T_{m_2}$ of $\mathfrak g$ as in \cref{coc,complesubsect}. 
In the induced exponential coordinates, we write $p=(x,t)$ for any $p\in\mathbb G$.\color{black}

\subsection{Intrinsic cones}
\label{subsec:cones}

The following definition rephrases~\cite{M14}*{Def.~4.3} and~\cite{FS16}*{Def.~9}. 

\begin{definition}[Intrinsic cones]
The open \emph{$X_1$-cone} with vertex $0\in\G$ and aperture $\alpha\in(0,+\infty]$ is the set
\begin{equation*}
C(0,\alpha)
=
\set*{p\in\G : \|\pi_\W(p)\|_\infty<\alpha\|\pi_\V(p)\|_\infty}.
\end{equation*} 
The corresponding \emph{negative} and \emph{positive cones} are
\begin{equation*}
C^\pm(0,\alpha)
=
\set*{p=(x,t)\in\G : \|\pi_\W(p)\|_\infty<\alpha\|\pi_\V(p)\|_\infty,\ x_1\gtrless0}.
\end{equation*}
Consequently, we let $C(p,\alpha)=p\star C(0,\alpha)$ and $C^\pm(p,\alpha)=p\star C^\pm(0,\alpha)$ for $p\in\G$.
\end{definition}

Note that, given $p=(x,t)\in\G$ and $\alpha\ge0$, $\|\pi_\W(p)\|_\infty\le\alpha\|\pi_\V(p)\|_\infty$ rewrites as
\begin{equation}
\label{eq:cone_explicit}
\max\set*{ 
\abs*{x^\perp}, \epsilon_2\abs*{t-\tfrac{1}{2}\scalar*{\mat x^\perp,x^\pp}\,}^{1/2}} 
\le 
\alpha
|x_1|.
\end{equation}

The following result collects some elementary properties of cones in Carnot groups of step~$2$, generalizing~\cite{M14}*{Lem.~4.5}.
We briefly detail its proof for the ease of the reader.

\begin{lemma}[Properties of cones]
\label{res:cones}
The following hold:
\begin{enumerate}[label=(\roman*),itemsep=1ex]

\item
\label{item:cone_space} 
$\displaystyle\bigcup_{s<s_0}C^+(p\star s\mathrm e_1,\alpha)=\G
$
for all $\alpha>0$, $p\in\G$ and $s_0\in\R$;

\item
\label{item:cone_inv}
$\displaystyle C^-(0,\alpha)\subset \iota\left(C^+\left(0,\alpha+\epsilon_2\sqrt{\alpha\matn}\right)\right)$
for all $\alpha>0$;

\item
\label{item:cone_abc}
$C^\pm(p,\beta)\subset C^\pm(0,\gamma)$
for all
$p\in C^\pm(0,\alpha)$, with $\alpha,\beta\ge0$ and
\begin{equation*}
\gamma=\max\set*{\alpha,\beta,\tfrac{\epsilon_2}2\sqrt{\left(\alpha\beta+2\beta\right)\matn}},
\end{equation*}
where $\matn>0$ is the constant in~\eqref{eq:b_constold}.
\end{enumerate}
\end{lemma}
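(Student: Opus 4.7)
For part~\ref{item:cone_space}, the plan is to exploit the left-invariance of the cones and show that, given any fixed $q\in\G$, the element $(p\star s\mathrm e_1)^{-1}\star q=(-s\mathrm e_1)\star(p^{-1}\star q)$ lies in $C^+(0,\alpha)$ for all $s\in\R$ sufficiently negative (and in particular for some $s<s_0$). Writing $p^{-1}\star q=(y,u)$ in exponential coordinates and using~\eqref{eq:group_law_2}, the first coordinate of $(-s\mathrm e_1)\star(y,u)$ equals $y_1-s$, while the remaining horizontal coordinates are unaffected and the central coordinate changes by a quantity linear in $s$ (here one uses $\scalar*{\mat \mathrm e_1,\mathrm e_1}=0$ from skew-symmetry to simplify). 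Therefore, as $s\to-\infty$, $\|\pi_\V(\cdot)\|_\infty$ grows linearly in $|s|$, whereas $\|\pi_\W(\cdot)\|_\infty$ grows at most like $\sqrt{|s|}$ in virtue of~\eqref{eq:cone_explicit} and the $1/2$ exponent on the central coordinate; the ratio vanishes in the limit, yielding the inclusion in $C^+(0,\alpha)$.

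For part~\ref{item:cone_inv}, given $p=(x,t)\in C^-(0,\alpha)$, I would simply verify that $-p=(-x,-t)$ lies in the larger positive cone $C^+(0,\alpha')$ with $\alpha'=\alpha+\epsilon_2\sqrt{\alpha\matn}$. The first coordinate flips sign so $(-p)_1>0$, and $\|\pi_\V(-p)\|_\infty=|x_1|$. For $\pi_\W(-p)$, the horizontal part is $-x^\perp$ with norm $\le\alpha|x_1|$, while the central part becomes $-t-\tfrac12\scalar*{\mat x^\perp,x^\pp}$. Writing this as $-(t-\tfrac12\scalar*{\mat x^\perp,x^\pp})-\scalar*{\mat x^\perp,x^\pp}$, the triangle inequality combined with the cone condition $\epsilon_2|t-\tfrac12\scalar*{\mat x^\perp,x^\pp}|^{1/2}\le\alpha|x_1|$ and the bound $|\scalar*{\mat x^\perp,x^\pp}|\le\matn\,|x^\perp||x^\pp|\le\matn\alpha|x_1|^2$ from~\eqref{eq:b_constold} gives $\epsilon_2|\cdot|^{1/2}\le(\alpha^2+\epsilon_2^2\matn\alpha)^{1/2}|x_1|\le\alpha'|x_1|$ via the elementary inequality $\sqrt{A+B}\le\sqrt{A}+\sqrt{B}$.

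For part~\ref{item:cone_abc}, the goal is to show $p\star q\in C^\pm(0,\gamma)$ whenever $p=(x,t)\in C^\pm(0,\alpha)$ and $q=(\xi,\tau)\in C^\pm(0,\beta)$. Since $x_1$ and $\xi_1$ share a sign, $\|\pi_\V(p\star q)\|_\infty=|x_1|+|\xi_1|$. The horizontal part of $\pi_\W(p\star q)$ is $x^\perp+\xi^\perp$, immediately bounded by $\alpha|x_1|+\beta|\xi_1|\le\max\set*{\alpha,\beta}(|x_1|+|\xi_1|)$. The central coordinate of $\pi_\W(p\star q)$ is the main computation: expanding $\scalar*{\mat x,\xi}-\scalar*{\mat(x^\perp+\xi^\perp),(x^\pp+\xi^\pp)}$ and using $\scalar*{\mat x^\pp,\xi^\pp}=0$ (both are multiples of $\mathrm e_1$) together with the skew-symmetry identity $\scalar*{\mat x^\pp,\xi^\perp}=-\scalar*{\mat \xi^\perp,x^\pp}$, one finds that the central part of $\pi_\W(p\star q)$ reduces to $(t-\tfrac12\scalar*{\mat x^\perp,x^\pp})+(\tau-\tfrac12\scalar*{\mat\xi^\perp,\xi^\pp})+\tfrac12\scalar*{\mat x^\perp,\xi^\perp}-\scalar*{\mat\xi^\perp,x^\pp}$. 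Estimating each summand with the cone conditions and~\eqref{eq:b_constold} produces $\alpha^2|x_1|^2/\epsilon_2^2+\beta^2|\xi_1|^2/\epsilon_2^2+(\tfrac12\alpha\beta+\beta)\matn\,|x_1||\xi_1|$, and comparing this termwise to $\gamma^2(|x_1|+|\xi_1|)^2/\epsilon_2^2$ via $2|x_1||\xi_1|$ as the cross term yields exactly the formula for $\gamma$ in the statement.

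The main obstacle is the bookkeeping in part~\ref{item:cone_abc}, where one has to track four cross terms arising in the central coordinate after group multiplication, and recognize which ones cancel or combine via skew-symmetry. Parts~\ref{item:cone_space} and~\ref{item:cone_inv} are essentially direct computations once the correct quasi-norm and projection formulas are in place.
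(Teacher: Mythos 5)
Your proof is correct and follows essentially the same route as the paper's: explicit computation of the cones in exponential coordinates for part (i), sub-additivity of $|\cdot|^{1/2}$ together with the bound $|\langle\mat x^\perp,x^{\pp}\rangle|\le\matn\alpha x_1^2$ for part (ii), and the same skew-symmetry cancellations in the central coordinate for part (iii). If anything, your final step in (iii) --- squaring and comparing termwise against $\gamma^2(|x_1|+|\xi_1|)^2$ with $2|x_1||\xi_1|$ as the cross term --- is the cleaner way to land exactly on the stated value of $\gamma$, since the paper's chain via sub-additivity of the square root still requires that same termwise comparison to conclude.
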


\begin{proof}
We prove each statement separately.

\vspace{1ex}

\textit{Proof of~\ref{item:cone_space}}.
Assume $p=0$ and note that, in virtue of~\eqref{eq:b_const} and~\eqref{eq:cone_explicit}, we can compute 
\begin{equation*}
\begin{split}
C^+(s\mathrm e_1,\alpha)
&=
s\mathrm e_1\star
C^+(0,\alpha)
\\
&=
s\mathrm e_1\star
\set*{(x,t)\in\G : 
\max
\set*{ 
\abs*{x^\perp},\epsilon_2\abs*{t-\tfrac12\scalar*{\mat x^\perp,x^\pp}}^{1/2}}< 
\alpha
x_1
}
\\
&=
\set*{(x,t)\in\G : 
\max
\set*{ 
\abs*{x^\perp},\epsilon_2\abs*{t-\tfrac12\scalar*{\mat x^\perp,x^\pp-2s\mathrm e_1}}^{1/2}} 
< 
\alpha
(x_1-s)
}.
\end{split}
\end{equation*}
Hence~\ref{item:cone_space} for $p=0$ follows from the fact that, for any $(x,t)\in\G$, there is $\sigma\in\R$ such that 
\begin{equation*}
\epsilon_2\abs*{t-\tfrac12\scalar*{\mat x^\perp,x^\pp-2s\mathrm e_1}}^{1/2} 
< 
\alpha
(x_1-s)
\quad
\text{for all}\ s<\sigma.
\end{equation*}
By left translation, \ref{item:cone_space} holds for any $p\in\G$.

\vspace{1ex}

\textit{Proof of \ref{item:cone_inv}}.
For any $\beta>0$ we have that 
\begin{equation*}
\iota\left(C^+(0,\beta)\right)
=
\set*{(x,t)\in\G : 
\max
\set*{ 
\abs*{x^\perp},\epsilon_2\abs*{t+\tfrac12\scalar*{\mat x^\perp,x^\pp}}^{1/2}}
<-\beta x_1
}. 
\end{equation*}
Hence, if $(x,t)\in C^-(0,\alpha)$, then
$\abs*{\scalar*{\mat x^\perp,x^\pp}}\le\matn\abs*{x^\perp}\abs*{x^\pp}<\alpha\matn\abs*{x^\pp}^2$ and so 
\begin{equation*}
\begin{split}
\epsilon_2\abs*{t+\tfrac12\scalar*{\mat x^\perp,x^\pp}}^{1/2}
\le 
\epsilon_2\abs*{t-\tfrac12\scalar*{\mat x^\perp,x^\pp}}^{1/2}
+
\epsilon_2\abs*{\scalar*{\mat x^\perp,x^\pp}}^{1/2}
<
-\left(\alpha+\epsilon_2\sqrt{\alpha\matn}\right)x_1,
\end{split}
\end{equation*}
proving~\ref{item:cone_inv}.

\vspace{1ex}

\textit{Proof of \ref{item:cone_abc}}.
If $p=(x,t)\in C^+(0,\alpha)$, then
\begin{equation}
\label{eq:ping}
\max
\set*{\abs*{x^\perp},\epsilon_2\abs*{t-\tfrac12\scalar*{\mat x^\perp,x^\pp}}^{1/2}}
\le 
\alpha x_1.
\end{equation} 
Moreover, if $q\in C^+(p,\beta)$, then $q=p*w$ with $w=(\xi,\tau)\in\G$ such that
\begin{equation}
\label{eq:pong}
\max
\set*{\abs*{\xi^\perp},\epsilon_2\abs*{\tau-\tfrac12\scalar*{\mat \xi^\perp,\xi^\pp}}^{1/2}}
\le 
\beta\xi_1.
\end{equation}
Now, since 
$
q
=
(x,t)\star(\xi,\tau)
=
\left(x+\xi,t+\tau+\tfrac12\scalar*{\mat x,\xi}\right)
$, 
we can write 
\begin{equation*}
\|\pi_\W(q)\|_\infty
=
\max\set*{
\abs*{x^\perp+\xi^\perp},
\epsilon_2\abs*{t+\tau+\tfrac12\scalar*{\mat x,\xi}-\tfrac12\scalar*{\mat (x^\perp+\xi^\perp),x^\pp+\xi^\pp}^{1/2}}
}.
\end{equation*}
Since $\scalar*{\mat x^\pp,\xi^\pp}=0$, by~\eqref{eq:b_const} we easily see that 
\begin{equation}
\label{eq:racchetta}
\begin{split}
\abs*{\scalar*{\mat x,\xi}-\scalar*{\mat x^\perp,\xi^\pp}-\scalar*{\mat \xi^\perp,x^\pp}}
&=
\abs*{\scalar*{\mat x^\perp,\xi^\perp}+2\,\scalar*{\mat x^\pp,\xi^\perp}}
\\
&\le
\matn\left(\abs*{x^\perp}\abs*{\xi^\perp}+2\abs*{x^\pp}\abs*{\xi^\perp}\right)
\\
&\le
\matn\left(\alpha\beta+2\beta\right)
\abs*{x^\pp}\abs*{\xi^\pp}.
\end{split}
\end{equation}
Therefore, by the triangle inequality, \eqref{eq:ping}, \eqref{eq:pong} and~\eqref{eq:racchetta}  yield that
\begin{equation*}
\begin{split}
\epsilon_2\big|t+\tau
&+
\tfrac12\scalar*{\mat x,\xi}-\tfrac12\scalar*{\mat (x^\perp+\xi^\perp),x^\pp+\xi^\pp}
\big|^{1/2}
\le 
\epsilon_2\abs*{t-\tfrac12\scalar*{\mat x^\perp,x^\pp}}^{1/2}
\\
&\quad+
\epsilon_2\abs*{\tau-\tfrac12\scalar*{\mat \xi^\perp,\xi^\pp}}^{1/2}
+
\tfrac{\epsilon_2}2
\abs*{\scalar*{\mat x,\xi}-\scalar*{\mat x^\perp,\xi^\pp}-\scalar*{\mat \xi^\perp,x^\pp}}^{1/2}
\\
&\le 
\alpha x_1
+
\beta\xi_1
+\tfrac{\epsilon_2}2\sqrt{\matn\left(\alpha\beta+2\beta\right)}\,
x_1^{1/2}
\xi_1^{1/2},
\end{split}
\end{equation*}
immediately implying that $q\in C^+(0,\gamma)$.
The case of negative cones is similar.
\end{proof}

\subsection{Intrinsic Lipschitz graphs and functions}

The following definition rephrases~\cite{M14}*{Def.~4.6} and~\cite{FS16}*{Def.~11 and Prop.~3.3}.

\begin{definition}[Intrinsic Lipschitz graph and function]
\label{def:ilip}
The \emph{intrinsic graph} of $\varphi\colon A\to\R$ over the non-empty set $A\subset\W$ is 
\begin{equation*}
\graph(\varphi;A)
=
\set*{\Phi(w):w\in A}
=
\set*{w\star\varphi(w) : w\in A}
\subset\G,
\end{equation*} 
where $\Phi\colon A\to\G$, $\Phi(w)=w\star\varphi(w)$ for $w\in A$, is the \emph{graph map}.
We say that $\varphi$ is \emph{intrinsic Lipschitz} on~$A$ with \emph{intrinsic Lipschitz constant} $L\in[0,+\infty)$, and we write $\varphi\in\ilip(A)$ and $L=\ilip(\varphi;A)$, 
if, for $L>0$, 
\begin{equation*}
\graph(\varphi;A)\cap C(p,1/L)=\emptyset
\quad
\text{for all}\ p\in\graph(\varphi;A),
\end{equation*}
and $\varphi$ constant on $A$ for $L=0$. 
Equivalently, for all $p,q\in\graph(\varphi;A)$, it holds that 
\begin{equation*}
|\varphi(\pi_\W(p))-\varphi(\pi_\W(q))|
\le 
L\|\pi_\W(q^{-1}\star p)\|_\infty.
\end{equation*}
We use the shorthand $\ilip(\varphi)=\ilip(\varphi;\W)$.
\end{definition}

As established in~\cite{FS16}*{Prop.~3.8}, intrinsic Lipschitz functions are continuous---in fact, $\tfrac12$-H\"older continuous, since $\G$ is a Carnot group of step~$2$.

The following result, which generalizes~\cite{M14}*{Prop.~4.8}, is a particular instance of~\cite{FS16}*{Th.~4.1} and 
\cite{V22}*{Th.~1.5}.
The key point here is to provide an explicit bound on the intrinsic Lipschitz constant of the intrinsic Lipschitz extension.

\begin{theorem}[Intrinsic Lipschitz extension]
\label{res:extension}
There is $c=c(\epsilon_2,\matn)>0$ with the following property.
If $\varphi\in\ilip(A)$ for some $\emptyset\ne A\subset \W$, with $L=\ilip(\varphi;A)$, then there exists $\psi\in\ilip(\W)$ such that $\psi(w)=\varphi(w)$ for all $w\in A$, $\|\psi\|_{L^\infty(\W)}=\|\varphi\|_{L^\infty(A)}$ and
\begin{equation*}
\ilip(\psi)\le c\,\max\set*{L,L^4}.
\end{equation*}
Here $\matn>0$ is the constant in~\eqref{eq:b_constold}.
\end{theorem}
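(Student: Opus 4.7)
The plan is to imitate the intrinsic McShane-type extension of \cites{FS16,V22} and to track the aperture quantitatively via the cone composition inequality in \cref{res:cones}\ref{item:cone_abc}. Concretely, I would set
\begin{equation*}
E = \bigcup_{w'\in A}\Phi(w')\star\overline{C^+(0,1/L)},
\qquad
\psi(w) = \inf\set*{s\in\R : w\star s\mathrm{e}_1\in E},
\end{equation*}
which is a well-defined real-valued function on $\W$ by \cref{res:cones}\ref{item:cone_space}. The identity $\psi|_A=\varphi$ follows from $\Phi(w)\in E$ for $w\in A$ (giving $\psi\le\varphi$) together with the observation that $\psi(w_0)<\varphi(w_0)$ would produce a point on $w_0\star\R\mathrm{e}_1$ in some $\Phi(w')\star\overline{C^+(0,1/L)}$ strictly below $\Phi(w_0)$; a direct computation using $\scalar*{\mat\mathrm{e}_1,\mathrm{e}_1}=0$ shows that translating such a point upward along $\mathrm{e}_1$ by the positive amount $\varphi(w_0)-s$ lands in the \emph{open} cone $\Phi(w')\star C^+(0,1/L)$, violating $\ilip(\varphi;A)\le L$. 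The $L^\infty$ bound is then recovered by truncating $\psi$ at $\pm\|\varphi\|_{L^\infty(A)}$, a modification which is compatible with the intrinsic Lipschitz property by a short application of \cref{res:cones}\ref{item:cone_abc}.

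The heart of the proof is the bound on $\ilip(\psi)$. For $p,q\in\graph(\psi)$, I would assume $q\in C^+(p,1/L')$ with $L'$ to be chosen, and seek a contradiction. By construction (and by approximating the infima if they are not attained) there exist $w'_p,w'_q\in A$ with $p\in\Phi(w'_p)\star\overline{C^+(0,1/L)}$ and $q\in\Phi(w'_q)\star\overline{C^+(0,1/L)}$. A single application of \cref{res:cones}\ref{item:cone_abc} to the chain $\Phi(w'_p)\to p\to q$, with apertures $\alpha=1/L$ and $\beta=1/L'$, places $q$ in $\Phi(w'_p)\star\overline{C^+(0,\gamma_1)}$. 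Inverting the leg $q\to\Phi(w'_q)$ through \cref{res:cones}\ref{item:cone_inv} and then reapplying \cref{res:cones}\ref{item:cone_abc} places $\Phi(w'_p)^{-1}\star\Phi(w'_q)$ in a bilateral cone of aperture $\gamma_2=\gamma_2(L,L',\matn,\epsilon_2)$. The intrinsic Lipschitz hypothesis $\ilip(\varphi;A)\le L$ forces $\gamma_2\ge 1/L$; solving the resulting inequality for $L'$ gives the announced bound $L'\le c(\epsilon_2,\matn)\max\set*{L,L^4}$.

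The main obstacle is the emergence of the quartic exponent. Each application of \cref{res:cones}\ref{item:cone_abc} contributes a square-root factor $(\epsilon_2/2)\sqrt{\matn(\alpha\beta+2\beta)}$ to the output aperture, and the inversion step in \cref{res:cones}\ref{item:cone_inv} contributes an extra factor $\epsilon_2\sqrt{\alpha\matn}$. Running two cone compositions and one inversion along the chain $\Phi(w'_p)\to p\to q\to\Phi(w'_q)$, and then inverting the resulting aperture inequality for $L'$ in terms of $L$, produces a polynomial whose dominant term at large $L$ is quartic in $L$, while for small $L$ the non-root terms in the $\max$ dominate and give a linear bound; taking the worst case over the two regimes yields $c\max\set*{L,L^4}$, as stated.
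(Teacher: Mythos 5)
Your construction of $E$ as a union of positive cones over $\graph(\varphi;A)$, your definition of $\psi$ as the lower envelope along vertical lines, the truncation to get the $L^\infty$ bound, and your quantitative account of where the exponent $4$ comes from (one square-root loss from the composition \cref{res:cones}\ref{item:cone_abc} and one from the inversion \cref{res:cones}\ref{item:cone_inv}) all match the paper's proof. The gap is in the logical scheme you use to bound $\ilip(\psi)$. You run the chain $\Phi(w'_p)\to p\to q\to\Phi(w'_q)$ and seek a contradiction with $\ilip(\varphi;A)\le L$, i.e.\ with the fact that $\Phi(w'_q)\notin C(\Phi(w'_p),1/L)$. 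But every application of \cref{res:cones}\ref{item:cone_abc} returns an aperture $\gamma\ge\max\set{\alpha,\beta}$, and the inversion step \ref{item:cone_inv} strictly enlarges the aperture from $\alpha$ to $\alpha+\epsilon_2\sqrt{\alpha\matn}$. Since the first leg of your chain already has aperture $\alpha=1/L$, the final aperture $\gamma_2$ satisfies $\gamma_2>1/L$ for \emph{every} choice of $L'$, and membership of $\Phi(w'_q)$ in a cone of aperture strictly larger than $1/L$ is perfectly consistent with $\ilip(\varphi;A)=L$: no contradiction can arise this way. (A secondary problem: your last composition glues a positive cone to a negative one, a case not covered by \cref{res:cones}\ref{item:cone_abc}, which only composes cones of the same sign.)

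The repair is to stop the chain at $q$ and contradict instead the fact that $q\in\graph(\psi)\subset\partial E$, hence $q\notin E$ because $E$ is open. For this one must \emph{shrink} the test aperture rather than keep it free: choosing $\beta=\frac{\alpha^2}{\alpha+2}\cdot\frac{4}{4+\matn\epsilon_2^2}$ makes the output of \cref{res:cones}\ref{item:cone_abc} equal to $\alpha$, so that $C^+(p,\beta)\subset C^+(\Phi(w'),\alpha)\subset E$ whenever $p\in C^+(\Phi(w'),\alpha)$; a continuity argument propagates the inclusion $C^+(p,\beta)\subset E$ to $p\in\partial E$, so two boundary points never lie in each other's positive $\beta$-cones. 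The negative direction is then recovered from \cref{res:cones}\ref{item:cone_inv}: $p\notin C^+(q,\beta)$ forces $q\notin C^-(p,\gamma)$ with $\beta=\gamma+\epsilon_2\sqrt{\gamma\matn}$, whence $\ilip(\psi)\le1/\gamma$. Unwinding these relations ($\beta\sim\alpha$, $\gamma\sim\beta$ for large $\alpha$; $\beta\sim\alpha^2$, $\gamma\sim\beta^2$ for small $\alpha$) yields $1/\gamma\le c(\epsilon_2,\matn)\max\set{L,L^4}$, which is the bound you announced but did not actually reach.
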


\begin{proof}
Assume $L>0$ to avoid trivialities, let $\alpha=1/L$, and define the open set
\begin{equation*}
E=\bigcup_{w\in A} C^+(\Phi(w),\alpha)\ne\emptyset.
\end{equation*}
Setting 
$\beta=\frac{\alpha^2}{\alpha+2}\,\frac{4}{4+\matn\epsilon_2^2}$,
by \cref{res:cones}\ref{item:cone_abc} we get that, if $q\in E$, then $C^+(q,\beta)\subset E$.
By an elementary continuity argument, the latter inclusion also holds for any $q\in\partial E$, the topological boundary of $E$.
Consequently, if $p,q\in\partial E$, then $p\notin C^+(q,\beta)$.
As in the proof of~\cite{M14}*{Prop.~4.8}, we thus get that $\psi\colon \W\to\R$, given by\begin{equation*}
\psi(w)=s_w\mathrm e_1,
\quad
\text{where}
\
s_w=\min\set*{\inf\set*{s\in\R:w\star s\mathrm e_1\in E},\|\varphi\|_{L^\infty(A)}}
\
\text{for}\
w\in\W,
\end{equation*}
is well defined and such that $\psi(w)=\varphi(w)$ for all $w\in A$, $\graph(\psi;\W)\subset\partial E$ and $\|\psi\|_{L^\infty(\W)}=\|\varphi\|_{L^\infty(A)}$.
Finally, given $p,q\in\graph(\psi;\W)$, arguing as in the proof of~\cite{M14}*{Prop.~4.8} and in virtue of \cref{res:cones}\ref{item:cone_inv}, we get that, if $p\notin C^+(q,\beta)$, then $q\notin C^-(p,\gamma)$, where $\gamma>0$ is chosen such that $\beta=\gamma+\epsilon_2\sqrt{\gamma\matn}$, that is, $\gamma=\frac14\left(\sqrt{\epsilon_2^2\matn+4\beta}-\epsilon_2\sqrt{\matn}\right)^2$.
In particular, $\psi\in\ilip(\W)$ with $\ilip(\psi)=1/\gamma$, and a simple computation yields that $\ilip(\psi)\le c\max\set*{L,L^4}$ with $c=c(\epsilon_2,\matn)$>0, concluding the proof.
\end{proof}

\subsection{Intrinsic gradient}

The following definition rephrases~\cite{ADDL20}*{Def.~3.1}. 

\begin{definition}[$\varphi$-gradient]
Let $A\subset\W$ be a non-empty open set and $\varphi\in C(A)$.
The \emph{$\varphi$-gradient of $f\in C^\infty(\W)$} is
$\nabla^\varphi f
=
(\nabla^\varphi_1f,\dots \nabla^\varphi_{m_1-1} f)\colon A\to\R^{m_1-1}$,
where
\begin{equation*}
\nabla^\varphi_i f(w)
=
X_{i+1}(f\circ\pi_\W)(\Phi(w))
\end{equation*}
for all $w\in A$ and each $i=1,\dots,m_1-1$.
\end{definition}

We can hence give the following definition, see the first lines of the proof of~\cite{ADDL20}*{Prop.~4.10} and~\cite{D20}*{Def.~3.2}.

\begin{definition}[Intrinsic gradient]
Let $A\subset\W$ be a non-empty open set.
The \emph{intrinsic gradient} of $\varphi\in C(A)$ is the distribution $\nabla^\varphi\varphi=(\nabla^\varphi\varphi_1,\dots,\nabla^\varphi\varphi_{m_1})$ acting as
\begin{equation*}
\scalar*{\nabla_i^\varphi\varphi,\vartheta}
=
\int_A\varphi\,(\nabla_i^{\varphi})^*\vartheta\di\leb^{n-1}
\quad
\text{for any}\ \vartheta\in C^1_c(A),
\end{equation*}   
where $(\nabla_i^{\varphi})^*$ is the formal adjoint of $\nabla^\varphi_i$, for each $i=1,\dots,m_1$. 
\end{definition}

The following result, which is an immediate consequence of~By \cite{D20}*{Prop.~5.3}, generalizes~\cite{CMPS14}*{Prop.~4.4} to any Carnot group of step~$2$.

\begin{theorem}[Bound on the intrinsic gradient]
\label{res:bounded_igrad}
Let $A\subset\W$ be a non-empty open set.
If $\varphi\in\ilip(A)$, then $\nabla^\varphi\varphi\in L^\infty(A;\R^{m_1-1})$, with $\|\nabla^\varphi\varphi\|_{L^\infty(A)}\le C_L$, for some $C_L>0$ depending on $L=\ilip(\varphi;A)$ only.
\end{theorem}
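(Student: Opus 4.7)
The plan is to reduce the statement to the case of smooth intrinsic Lipschitz functions via an intrinsic mollification, and then to derive the pointwise $L^\infty$ estimate for smooth functions directly from the cone condition in \cref{def:ilip}. The final step is a standard weak-$*$ passage to the limit that transports the uniform bound from the smooth approximants to the distributional intrinsic gradient.

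First I would fix a compactly supported open set $A'\Subset A$ and construct a family $\varphi_\delta\in C^\infty(A')$ for small $\delta>0$ such that $\varphi_\delta\to\varphi$ uniformly on $A'$ and $\ilip(\varphi_\delta;A')\le c\,L$, with $c$ a structural constant depending on $(\epsilon_2,\matn)$ only. One can obtain such an approximation by the group convolution on $\W$ applied to the intrinsic Lipschitz extension of $\varphi$ given by \cref{res:extension}: since the translations on $\W$ act as group automorphisms of $\G$ on the graph, the intrinsic Lipschitz constant is stable, up to a structural factor, under such a regularization (this is the content of the approximation results underlying~\cite{D20}*{Prop.~5.3}).

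Next I would establish the pointwise bound for the smooth case. Fix $w\in A'$ and set $p=\Phi_\delta(w)=w\star\varphi_\delta(w)\mathrm{e}_1\in\graph(\varphi_\delta;A')$. The condition $\graph(\varphi_\delta;A')\cap C(p,1/(cL))=\emptyset$ combined with~\eqref{eq:cone_explicit} says that, for every $q=\Phi_\delta(w')$ with $w'\in A'$ close to $w$, writing $q=p\star(\xi,\tau)$ one has
\begin{equation*}
\max\set*{\abs{\xi^{\perp}},\,\epsilon_2\abs{\tau-\tfrac12\scalar{\mat \xi^{\perp},\xi^{\pp}}}^{1/2}}
\le
\tfrac{1}{cL}\,|\xi_1|.
\end{equation*}
Expanding to first order in $w'-w$ and identifying $\xi_1=\varphi_\delta(w')-\varphi_\delta(w)+O(|w'-w|^2)$, the horizontal part of the inequality rewrites, after normalization, as a bound of the form $|\nabla^{\varphi_\delta}\varphi_\delta(w)|\le C_L$ where $C_L$ is an explicit rational function of $L$ and of the structural constants $\epsilon_2,\matn$. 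I would compute this expansion carefully using the definition of $\nabla^{\varphi_\delta}_i$ as $X_{i+1}$ applied along the graph map, and check that the nonlinear corrections coming from the matrices $\mat^j$ are absorbed by the cone inequality (which is the point where step~$2$ and the constant $\matn$ in~\eqref{eq:b_const} enter).

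Finally, with $\|\nabla^{\varphi_\delta}\varphi_\delta\|_{L^\infty(A')}\le C_L$ uniformly in $\delta$, I would pass to the limit in the definition of the distributional intrinsic gradient, using that the formal adjoint $(\nabla^{\varphi_\delta}_i)^*$ converges, when tested against $\vartheta\in C^1_c(A')$, to $(\nabla^\varphi_i)^*$ by the uniform convergence $\varphi_\delta\to\varphi$ and a Banach--Alaoglu argument to extract a weak-$*$ limit in $L^\infty(A';\R^{m_1-1})$. Identifying the limit with $\nabla^\varphi\varphi$ yields the desired bound, with $C_L$ depending on $L$ only. The main obstacle is the second step: controlling, in a general step-$2$ Carnot group beyond the Heisenberg family, the nonlinear coupling between the horizontal and vertical components of $\xi$ induced by the matrices $\mat^j$, and ensuring that the pointwise estimate scales correctly so that $C_L$ remains finite and depends only on $L$ (and on the fixed structural data of $\G$).
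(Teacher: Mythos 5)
The paper does not actually prove this statement: it is presented as an immediate consequence of \cite{D20}*{Prop.~5.3} (the step-$2$ generalization of \cite{CMPS14}*{Prop.~4.4}), so the ``official proof'' is a citation. Your proposal tries to reconstruct that external proof, and while the overall architecture (smooth approximation, pointwise bound from the cone condition, weak-$*$ limit) is indeed how such results are proved in the literature, there is a genuine gap at its core. Your Step~1 asserts that a group convolution on $\W$ applied to the extension of $\varphi$ produces smooth approximants with $\ilip(\varphi_\delta)\le cL$, ``since the translations on $\W$ act as group automorphisms of $\G$ on the graph.'' This is not true: left translations by elements of $\W$ are not automorphisms, and, more importantly, the cone condition in \cref{def:ilip} is centered at the point $p=w\star\varphi(w)$, hence depends nonlinearly on the values of $\varphi$ itself; the convexity argument that makes Euclidean mollification preserve Lipschitz constants does not carry over. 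Producing smooth approximants of an intrinsic Lipschitz function with controlled intrinsic Lipschitz constant and convergent intrinsic gradients is precisely the substantial content of \cite{CMPS14} and of the results in \cite{D20} that the theorem cites, so your argument is circular exactly where the work lies.

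Two smaller points. First, the inequality you extract from the cone condition is reversed: the graph \emph{avoiding} $C(p,1/(cL))$ gives $\|\pi_\W(p^{-1}\star q)\|_\infty\ge \tfrac{1}{cL}\|\pi_\V(p^{-1}\star q)\|_\infty$, i.e.\ $|\xi_1|\le cL\max\set*{\abs{\xi^{\perp}},\epsilon_2\abs{\tau-\tfrac12\scalar{\mat\xi^{\perp},\xi^{\pp}}}^{1/2}}$, which is what bounds the increment of $\varphi$ and hence the intrinsic gradient; as written, your inequality says $q$ lies \emph{inside} the closed cone. Second, since the first-layer coordinates simply add under the group law \eqref{eq:group_law_2}, one has $\xi_1=\varphi_\delta(w')-\varphi_\delta(w)$ exactly, with no $O(|w'-w|^2)$ correction. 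If you want a self-contained argument, either carry out the approximation scheme of \cite{CMPS14}/\cite{D20} in full, or work directly with difference quotients along the integral curves of the projected vector fields and identify them with the distributional intrinsic gradient --- but that identification is again the nontrivial part of \cite{D20}*{Prop.~5.3}.
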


\subsection{Intrinsic area formula}
The following result follows from~\cite{D20}*{Lem.~5.2 and Th.~5.7} (also see~\cite{ADDL20}*{Prop.~4.10(d)} for more regular functions).

\begin{theorem}[Intrinsic area formula]
\label{res:area}
Let $A\subset\W$ be an non-empty open set.
The \emph{intrinsic epigraph of $\varphi\in\lip_\W(A)$ over $A$},
\begin{equation*}
E_{\varphi,A}
=
\set*{\exp(sX_1): w\in A,\ s>\varphi(w)}
\subset\G,
\end{equation*}
has locally finite $\G$-perimeter in $A\star\R$, its inner horizontal normal is given by
\begin{equation*}
\nu_{E_{\varphi,A}}
(w\star\varphi(w))
=
\left(
\frac1{\sqrt{1+|\nabla^\varphi\varphi(w)|^2}}
,
\frac{-\nabla^\varphi\varphi(w)}{\sqrt{1+|\nabla^\varphi\varphi(w)|^2}}
\right)
\quad
\text{for $\leb^{n-1}$-a.e.}\ w\in A,
\end{equation*}
and its $\G$-perimeter satisfies the \emph{intrinsic area formula} 
\begin{equation}
\label{eq:area}
P(E_{\varphi,A};A'\star\R)
=
\int_{A'}\sqrt{1+|\nabla^\varphi\varphi(w)|^2}\di\leb^{n-1}(w)
\quad
\text{for any}\ A'\subset A.
\end{equation}
\end{theorem}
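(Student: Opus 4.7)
The plan is to first establish the formula for smooth $\varphi$ by direct computation using the Euclidean perimeter representation~\eqref{eq:perimeter_lip}, then to extend to the general intrinsic Lipschitz case by approximation.

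\textbf{Smooth case.} When $\varphi\in C^1(A)$, exploiting the polynomial form of~\eqref{eq:group_law_2}, the epigraph $E_{\varphi,A}$ admits in exponential coordinates a local description as the Euclidean epigraph $\set*{x_1>\tilde\varphi}$ of an auxiliary $C^1$ function $\tilde\varphi$ of $(x_2,\dots,x_{m_1},t_1,\dots,t_{m_2})$ implicitly determined by $\varphi$. Hence $E_{\varphi,A}$ has Euclidean Lipschitz topological boundary, and~\eqref{eq:perimeter_lip} gives
\begin{equation*}
P(E_{\varphi,A};A'\star\R)
=
\int_{\partial E_{\varphi,A}\cap(A'\star\R)}
\Bigl(\textstyle\sum_{i=1}^{m_1}\scalar*{N_{E_{\varphi,A}},X_i}^2\Bigr)^{1/2}\di\haus^{n-1}.
\end{equation*}
A direct calculation, using $X_i=\partial_{x_i}+\tfrac12\sum_{j,k}(\mat^j)_{ik}x_k\,\partial_{t_j}$ together with the explicit graph map $\Phi(w)=w\star\varphi(w)$, shows that the horizontal part of the Euclidean unit normal along $\partial E_{\varphi,A}$ is $(1,-\nabla^\varphi\varphi)/\sqrt{1+|\nabla^\varphi\varphi|^2}$, and that the remaining Jacobian factor cancels against the Euclidean area element produced by parametrizing $\partial E_{\varphi,A}$ via $\Phi$. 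This yields both the claimed expression for $\nu_{E_{\varphi,A}}$ and~\eqref{eq:area}.

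\textbf{Intrinsic Lipschitz case.} For $\varphi\in\ilip(A)$, \cref{res:bounded_igrad} gives $\nabla^\varphi\varphi\in L^\infty(A;\R^{m_1-1})$. One approximates $\varphi$ by smooth functions $\varphi_\varepsilon$ converging to $\varphi$ locally uniformly on $A$ with $\nabla^{\varphi_\varepsilon}\varphi_\varepsilon\to\nabla^\varphi\varphi$ weakly$^*$ in $L^\infty(A;\R^{m_1-1})$. The smooth case applies to each $\varphi_\varepsilon$, and~\eqref{eq:area} for $\varphi$ follows by combining lower semicontinuity of the $\G$-perimeter, dominated convergence on the right-hand side, and the compactness provided by~\cref{res:compactness}. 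Uniqueness in the Radon--Nikodym decomposition then identifies $\nu_{E_{\varphi,A}}$.

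\textbf{Main obstacle.} The principal difficulty lies in constructing the regularizing sequence $(\varphi_\varepsilon)$: the map $\varphi\mapsto\nabla^\varphi\varphi$ depends nonlinearly on $\varphi$ through $\Phi$, so ordinary mollification on $\W\equiv\R^{n-1}$ neither commutes with the intrinsic gradient nor preserves the intrinsic Lipschitz structure. A regularization scheme compatible with $\nabla^\varphi$, yielding the required weak$^*$ convergence of the intrinsic gradients, is the technical core of the argument and is developed in~\cite{D20}, from which the result is then deduced.
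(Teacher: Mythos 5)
The paper does not actually prove this theorem: it is imported wholesale from~\cite{D20}*{Lem.~5.2 and Th.~5.7} (with~\cite{ADDL20}*{Prop.~4.10(d)} cited for the regular case), so there is no in-paper argument to compare yours against. Your sketch is a faithful reconstruction of how such results are established in the literature --- smooth case by the Euclidean representation~\eqref{eq:perimeter_lip} plus a Jacobian computation, then approximation --- and you correctly identify that the intrinsic-gradient-compatible regularization is the real content and that it lives in~\cite{D20}. In that sense your proposal and the paper's ``proof'' coincide: both ultimately defer to the same reference.

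Two points in your approximation step would not survive as written, though. First, weak$^*$ convergence of $\nabla^{\varphi_\varepsilon}\varphi_\varepsilon$ in $L^\infty$ is not enough to pass to the limit in $\int_{A'}\sqrt{1+|\nabla^{\varphi_\varepsilon}\varphi_\varepsilon|^2}$: by convexity of $z\mapsto\sqrt{1+|z|^2}$ you only get lower semicontinuity of the right-hand side, and combined with lower semicontinuity of the perimeter on the left this yields two inequalities pointing the same way, not the claimed equality. One needs $\leb^{n-1}$-a.e.\ (or $L^1_\loc$) convergence of the intrinsic gradients, which is precisely what the regularization schemes of~\cite{CMPS14} and~\cite{D20} deliver; ``dominated convergence'' presupposes this stronger mode of convergence. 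Second, \cref{res:compactness} concerns $(\Lambda,r_0)$-minimizers and does not apply to the approximating epigraphs $E_{\varphi_\varepsilon}$, which need not minimize anything; the compactness you want is the standard $L^1_\loc$ precompactness of sets with locally equibounded perimeter. Neither issue is fatal --- both are resolved inside~\cite{D20} --- but as a self-contained argument the limit passage is where your proposal has a genuine gap.
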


It is worth noticing that, via well-known standard arguments, the area formula~\eqref{eq:area} can be generalized as 
\begin{equation*}
\int_{\partial E_{\varphi,A}\cap A'\star\R} g(p)\di\mu_E(p)
=
\int_{A'}g(\Phi(w))\sqrt{1+|\nabla^\varphi\varphi(w)|^2}\di\leb^{n-1}(w)
\end{equation*}
whenever $g\colon\partial E_{\varphi,A}\to\R$ is a Borel function.

\section{Intrinsic Lipschitz approximation}\label{ilasec}

Throughout this section, we assume that $(\G,\star)$ is a plentiful group as in \cref{def:plentiful}.
Our approach adapts some ideas of~\cites{M14,MV15,MS17} to the present more general setting.

\subsection{Small-excess position}

The following result corresponds to~\cite{MV15}*{Lem.~3.3}, which was stated in the setting of the Heisenberg groups $\mathbb H^n$, $n\ge 2$ (also see~\cite{M12}*{Lem.~22.10} for the Euclidean case).
The very same result holds for any plentiful group, with identical proof, thanks to \cref{res:flat}.

\begin{lemma}[Small-excess position]
\label{res:small_exc_pos}
For any $s\in(0,1)$, $\Lambda\in[0,+\infty)$ and $r\in(0,+\infty]$ with $\Lambda r_0\le1$, there exists $\omega(s,\Lambda,r_0)>0$ with the following property.
If $E\subset\G$ is a $(\Lambda,r_0)$-minimizer of the $\G$-perimeter in $C_2$, with $0\in\partial E$ and $\exc(2)\le\omega(s,\Lambda,r_0)$, then 
\begin{equation*}
\sup\set*{|\hgt(p)| : p\in C_1\cap\partial E}
\le 
s,
\end{equation*} 
\begin{equation*}
\leb^{n-1}\left(\set*{p\in E\cap C_1 : \hgt(p)>s}\right)=0,
\end{equation*}
\begin{equation*}
\leb^{n-1}\left(\set*{p\in  C_1\setminus E : \hgt(p)<-s}\right)=0.
\end{equation*}
\end{lemma}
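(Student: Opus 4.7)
The plan is to follow the by-now-standard contradiction scheme of \cite{M12}*{Lem.~22.10} and \cite{MV15}*{Lem.~3.3}, with \cref{res:flat} serving as the plentiful-group substitute for the Euclidean rigidity of zero-excess configurations. Suppose the lemma fails for some $s\in(0,1)$: then there is a sequence $(E_j)_{j\in\N}$ of $(\Lambda,r_0)$-minimizers of the $\G$-perimeter in $C_2$ with $0\in\partial E_j$ and $\exc(E_j,0,2,\nu)\to 0$, for which at least one of the three conclusions fails at level $s$ for each~$j$.

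By \cref{res:compactness}, up to a subsequence, $E_j\to E$ in $L^1_\loc(C_2)$ with $|\mu_{E_j}|\overset{*}{\rightharpoonup}|\mu_E|$, the limit $E$ is itself a $(\Lambda,r_0)$-minimizer of the $\G$-perimeter in $C_2$, and the Kuratowski convergence gives $0\in\partial E$. Rewriting the excess as $\exc(E,0,r,\nu)=r^{-(Q-1)}\bigl(|\mu_E|(C_r)-\scalar{\nu,\mu_E(C_r)}\bigr)$, weak-star convergence and the estimate $\exc(E_j,0,r,\nu)\le(2/r)^{Q-1}\exc(E_j,0,2,\nu)\to 0$ together yield $\exc(E,0,r,\nu)=0$ for all but countably many $r\in(0,2)$. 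Hence $\nu_E=\nu$ holds $|\mu_E|$-a.e.\ in $C_2$.

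Now apply \cref{res:flat}: at every $p\in\partial E\cap C_2$ there is a ball $B_{\rho_p}(p)\subset C_2$ on which $E$ agrees, up to $\leb^n$-null sets, with $\set{q\in B_{\rho_p}(p):\hgt(q)>\hgt(p)}$. An elementary open/closed argument on the connected set $C_2$ shows that the level $\hgt(p)$ must be independent of $p\in\partial E\cap C_2$; evaluating at $p=0$ we obtain
\begin{equation*}
E\cap C_2=\set{\hgt>0}\cap C_2,\qquad \partial E\cap C_2\subset\set{\hgt=0}.
\end{equation*}
Each failure mode then yields a contradiction. If $p_j\in C_1\cap\partial E_j$ satisfy $|\hgt(p_j)|>s$, then, after shrinking $C_1$ slightly to keep limit points inside $C_2$, Kuratowski convergence produces $p\in\partial E\cap C_2$ with $|\hgt(p)|\ge s$, contradicting the vanishing of $\hgt$ on $\partial E\cap C_2$. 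If either bulk condition fails, the $L^1_\loc$ convergence of characteristic functions transfers a positive amount of Lebesgue mass either to $E\cap C_1\cap\set{\hgt>s}$ or to $(C_1\setminus E)\cap\set{\hgt<-s}$, both of which are empty by the half-space identification above.

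The main point I expect to require care is the propagation step: a single application of \cref{res:flat} only pins down~$E$ on a small ball around a given boundary point, and one must verify that the ``level'' $\hgt(p)$ is globally constant on $\partial E\cap C_2$. This is precisely where the plentifulness of $\G$ is essential, since \cref{res:flat} (and behind it \cref{res:flat_level_sets}) is the mechanism that forbids the pathological zero-excess non-flat boundaries available in $\mathbb H^1$. Once this step is in place, the rest of the proof is a verbatim transcription of the Heisenberg/Euclidean argument.
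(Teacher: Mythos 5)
Your overall architecture --- contradiction, compactness via \cref{res:compactness}, lower semicontinuity of the rewritten excess, zero excess for the limit, and rigidity via \cref{res:flat} --- is exactly the argument the paper defers to (the MV15/Maggi scheme), and your treatment of the first failure mode (the sup bound, via Kuratowski convergence inside $\overline{C_1}\subset C_2$) is correct. However, your handling of the two Lebesgue-measure conditions has two genuine gaps. First, a direction problem: you derive $E\cap C_2=\set{\hgt>0}\cap C_2$ from \cref{res:flat}, but then assert that $E\cap C_1\cap\set{\hgt>s}$ and $(C_1\setminus E)\cap\set{\hgt<-s}$ are ``empty by the half-space identification''. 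They are not: for the upper half-space these sets equal $\set{\hgt>s}\cap C_1$ and $\set{\hgt<-s}\cap C_1$, both of positive measure, so no contradiction is reached as written. The sets that genuinely vanish for $\set{\hgt>0}$ are $E\cap\set{\hgt<-s}$ and $E^c\cap\set{\hgt>s}$; you must reconcile the inner-normal orientation before the contradiction closes. Second, and independently, the inference ``$L^1_\loc$ convergence transfers a positive amount of Lebesgue mass'' is invalid: positivity of measure is not preserved under $L^1_\loc$ limits, since the masses along the sequence may tend to $0$. The standard repair (as in \cite{M12}*{Lem.~22.10}) is to first secure the sup bound for all large $j$, note that the relevant slab of $C_1$ is then an open connected set disjoint from $\partial E_j$, so $\mathbf 1_{E_j}$ is a.e.\ constant on it, whence a single density point fills the \emph{entire} slab; only this full-measure statement survives the passage to the limit and contradicts the half-space structure.

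A smaller point concerns the propagation step you rightly flag: the ``open/closed argument'' is problematic because $\partial E\cap C_2$ is neither open nor a priori connected, and two overlapping balls $B_{\rho_p}(p)$, $B_{\rho_q}(q)$ need not meet the intermediate slab $\set{\hgt(p)<\hgt<\hgt(q)}$ that would force the levels to coincide. You can avoid the patching altogether: the first half of the proof of \cref{res:flat} uses only that $\nu_E=\nu$ holds $\mu_E$-a.e.\ on an open set to obtain $X_1\mathbf 1_E\ge 0$ and $X_i\mathbf 1_E=0$ for $i\ge 2$ distributionally there, and \cref{res:flat_level_sets} is stated for an arbitrary connected open $\Omega$; applying it directly with $\Omega=C_2$ yields the global half-space structure in one stroke, with the level pinned at $0$ by $0\in\partial E$.
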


\subsection{Intrinsic Lipschitz approximation}

We are now finally ready to state and prove our main result, which generalizes~\cite{M14}*{Th.~5.1} and---only partially---\cite{MS17}*{Th.~3.1} to the setting of plentiful groups.
Its proof revisits that of~\cite{MS17}*{Th.~3.1}, closely following the usual approach in the Euclidean setting, see~\cite{M12}*{Th.~23.7}.

\begin{theorem}[Intrinsic Lipschitz approximation]
\label{res:lipa}
For any $L\in(0,1)$, $\Lambda\in[0,+\infty)$ and $r_0\in(0,+\infty]$, with $\Lambda r_0\le 1$, there exist $\varepsilon,C>0$, depending on $L$, $\Lambda$ and $r_0$ only, with the following property.
If $E\subset\G$ is a $(\Lambda,r_0)$-minimizer of the $\G$-perimeter in $C_{324}$ with $\exc(324)\le\varepsilon$ and $0\in\partial E$, then, letting 
\begin{equation*}
M=C_1\cap\partial E,
\quad
M_0=\set*{
q\in M : \sup_{0<r<16}\exc(q,r)\le\varepsilon},
\end{equation*}
there exists an intrinsic Lipschitz funciton $\varphi\colon\W\to\R$ such that 
\begin{equation}
\label{eq:lipa_norms}
\sup_\W|\varphi|\le L,
\quad
\ilip(\varphi)\le c(\epsilon_2,\matn)\,L,
\end{equation}
\begin{equation}
\label{eq:lipa_graph}
M_0\subset M\cap\Gamma,
\quad
\Gamma=\graph(\varphi;D_1),
\end{equation}
\begin{equation}
\label{eq:lipa_diffsim}
\shaus_\infty^{Q-1}(M\bigtriangleup\Gamma)
\le 
C\,\exc(324),
\end{equation}
\begin{equation}
\label{eq:lipa_energy}
\int_{D_1}|\nabla^\varphi\varphi|^2\di\leb^{n-1}
\le 
C\,\exc(324),
\end{equation}
where $c(\epsilon_2,\matn)>0$ is the constant given by \cref{res:extension}.
\end{theorem}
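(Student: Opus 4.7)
I plan to transport to plentiful groups the De Giorgi-type scheme carried out in the Heisenberg setting in~\cites{M14,MS17}, itself modelled on~\cite{M12}*{Th.~23.7}. The algebraic ingredient that makes the transposition possible is the locally-constant-normal result \cref{res:flat}, which here plays the role of~\cite{M14}*{Prop.~3.6} for $\mathbb H^n$ with $n\ge 2$. The argument splits into four moves: first, confine $\partial E\cap C_1$ to a thin horizontal slab via \cref{res:small_exc_pos}; second, show that $M_0$ lies in an $L$-intrinsic Lipschitz graph over $\pi_\W(M_0)$; third, extend this partial graph to $\varphi\in\ilip(\W)$ via \cref{res:extension}, so as to meet \eqref{eq:lipa_norms} and \eqref{eq:lipa_graph}; fourth, prove \eqref{eq:lipa_diffsim} via a Vitali-type covering of $M\setminus M_0$ and derive \eqref{eq:lipa_energy} from the intrinsic area formula \cref{res:area}.

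The core move is the second. Fix $q\in M_0$ and, by left-translation, assume $q=0$. I claim that for $\varepsilon=\varepsilon(L,\Lambda,r_0)$ sufficiently small, no point of $\partial E\cap C_1$ can lie in $C(0,1/L)$; this makes the partial graph $\varphi\colon\pi_\W(M_0)\to\R$, $\varphi(w):=\hgt(\Phi^{-1}(w))$, intrinsic Lipschitz with constant $L$ in the sense of \cref{def:ilip}. I would prove the claim by contradiction and blow-up: if it failed, there would exist $(\Lambda,r_0)$-minimizers $E_k$ with $0\in\partial E_k$ and $\sup_{0<\rho<16}\exc(E_k,0,\rho)\to 0$, together with $p_k\in\partial E_k\cap C_1\cap C(0,1/L)$. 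Setting $r_k:=\|p_k\|_C$ and rescaling via \cref{rem:scaling}, the sets $E_k':=\delta_{1/r_k}(E_k)$ are $(\Lambda r_k,r_0/r_k)$-minimizers in arbitrarily large cylinders whose excess at $0$ vanishes at every fixed scale. \cref{res:compactness} extracts a limit $E_\infty$ with $\nu_{E_\infty}\equiv\nu$ $\mu_{E_\infty}$-a.e.; \cref{res:flat} then forces $E_\infty=\{\hgt>0\}$ up to $\leb^n$-null sets. By Kuratowski convergence the accumulation point $p_\infty:=\lim_k\delta_{1/r_k}(p_k)$ satisfies $\|p_\infty\|_C=1$ and lies on $\partial E_\infty=\{\hgt=0\}$, while $p_\infty\in\overline{C(0,1/L)}$ forces $L\,\|\pi_\W(p_\infty)\|_\infty\le|\hgt(p_\infty)|=0$, hence $p_\infty=0$, contradicting $\|p_\infty\|_C=1$.

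For the third step, having applied \cref{res:small_exc_pos} with parameter $s_0\le L$, I already have $|\varphi|\le s_0\le L$ on $\pi_\W(M_0)$, and \cref{res:extension} produces an extension $\psi\in\ilip(\W)$ preserving the $L^\infty$-norm and with $\ilip(\psi)\le c(\epsilon_2,\matn)\max\{L,L^4\}\le c(\epsilon_2,\matn)L$ since $L<1$; relabelling $\psi$ as $\varphi$ yields \eqref{eq:lipa_norms} and \eqref{eq:lipa_graph}. For the fourth step, each $p\in M\setminus M_0$ admits $r(p)\in(0,16)$ with $\exc(E,p,r(p))>\varepsilon$. A Vitali $5r$-selection produces countably many pairwise disjoint cylinders $C_{r(p_i)}(p_i)\subset C_{324}$ whose quintuple enlargements still cover $M\setminus M_0$; combining \cref{res:density} on the enlargements with the definition of excess on the disjoint cylinders gives
\begin{equation*}
\shaus^{Q-1}_\infty(M\setminus M_0)
\le C\sum_i r(p_i)^{Q-1}
\le\frac{C}{\varepsilon}\sum_i \exc(E,p_i,r(p_i))\,r(p_i)^{Q-1}
\le C(L)\,\exc(324),
\end{equation*}
and a symmetric bound for $\Gamma\setminus M_0$, via \cref{res:area} applied to $\varphi$ and \cref{res:exc_meas}, yields \eqref{eq:lipa_diffsim}. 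Finally, \eqref{eq:lipa_energy} follows by combining \cref{res:area}, the expansion $\sqrt{1+t^2}\ge 1+\tfrac{t^2}{2}-Ct^4$, the pointwise bound $|\nabla^\varphi\varphi|\le C_L$ of \cref{res:bounded_igrad}, and the identity $\shaus^{Q-1}_\infty(M)-\leb^{n-1}(D_1)=\exc(1)$ from \cref{res:exc_meas}, after controlling $\exc(1)\le C\,\exc(324)$ and absorbing the symmetric difference already bounded in \eqref{eq:lipa_diffsim}.

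The principal obstacle is that the blow-up in the second step is inherently non-quantitative, so the thresholds $\varepsilon$ and $C$ end up depending on $L$, $\Lambda$ and $r_0$. In $\mathbb H^n$ with $n\ge 2$ this dependence is removed thanks to the deep height estimate of~\cite{MV15}, whose plentiful-group analog is not yet available and is explicitly singled out in the introduction as the subject of future work.
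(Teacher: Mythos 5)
Your outline is sound and reaches all four conclusions with the right ingredients, and its overall architecture (small-excess position, partial graph over $\pi_\W(M_0)$, extension, Vitali covering, energy estimate) is the paper's; but two sub-steps take a genuinely different route. For the key Lipschitz property of the partial graph, the paper does not run a fresh blow-up: for $p\in M$ and $q\in M_0$ it sets $\lambda=d_C(p,q)<8$ and applies \cref{res:small_exc_pos} at scale $2$ to the rescaled translate $F=\delta_{1/\lambda}(q^{-1}\star E)$, using $\exc(F,0,2)=\exc(E,q,2\lambda)\le\varepsilon$ precisely because $2\lambda<16$; the resulting height bound $|\hgt(q^{-1}\star p)|\le L\,d_C(p,q)$ together with $L<1$ and the definition \eqref{eq:norm_C} forces $d_C(p,q)=\|\pi_\W(q^{-1}\star p)\|_\infty$, which is the cone condition. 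The compactness is thus already packaged inside \cref{res:small_exc_pos}. Your contradiction/blow-up via \cref{res:compactness} and \cref{res:flat} reproves that compactness at the level of the cone exclusion; it can be made to work, but you must normalize the blow-up so that the excess control inherited from the window $r<16$ in the definition of $M_0$ reaches a full neighbourhood of the limit point $p_\infty$ with $\|p_\infty\|_C=1$: since $\|q^{-1}\star p\|_C$ can be as large as $8$ and a ball where \cref{res:flat} applies around $p_\infty$ sits inside a cylinder of comparable radius by \eqref{eq:inclusions_BC}, a careless choice of the rescaling factor leaves $p_\infty$ outside the region where the limit set is known to be a half-space. The paper's choice of the scale $\lambda=d_C(p,q)$ handles exactly this bookkeeping, and I would recommend adopting it.

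Two smaller points. For \eqref{eq:lipa_energy} the paper integrates the pointwise identity $1-\scalar{\nu_E,\nu}^2=|\nabla^\varphi\varphi|^2/(1+|\nabla^\varphi\varphi|^2)$ over $M\cap\Gamma$ (via \cref{res:area} and the locality of the normal) and then removes the symmetric difference using \eqref{eq:lipa_diffsim}; your area-comparison through \cref{res:exc_meas} is a valid alternative, but replace the expansion $\sqrt{1+t^2}\ge 1+\tfrac{t^2}{2}-Ct^4$, whose quadratic coefficient may be destroyed by the quartic term when $|t|\le C_L$ is not small, by the exact bound $\sqrt{1+t^2}-1\ge t^2/\bigl(2\sqrt{1+C_L^2}\bigr)$. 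In the covering step, the second half of \eqref{eq:lipa_diffsim} concerns $\Gamma\setminus M$ (not $\Gamma\setminus M_0$) and is obtained from \eqref{eq:small_height_est} together with the inclusion $M\cap\pi_\W^{-1}(\pi_\W(\Gamma\setminus M))\subset M\setminus\Gamma$; otherwise that step coincides with the paper's.
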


\begin{proof}
Let $L\in(0,1)$, $\Lambda\in[0,+\infty)$ and $r_0\in(0,+\infty]$ be fixed and let $E$, $M$ and $M_0$ be as in the statement.
With the notation of \cref{res:small_exc_pos}, we choose
\begin{equation}
\label{eq:eee}
\varepsilon
=
\min\set*{
\frac{\omega(L,\Lambda,r_0)}{162^{Q-1}},\omega\left(L,8\Lambda,\frac{r_0}8\right)
}.
\end{equation}
The proof is then divided into three steps.

\vspace*{1ex}

\textit{Step~1: construction of $\varphi$}.
Since $\exc(324)\le\omega(L,\Lambda,r_0)$ by~\eqref{eq:eee}, by \cref{res:small_exc_pos} we have
\begin{equation}
\label{eq:protobounded}
\sup\set*{|\hgt(p)| : p\in C_1\cap\partial E}
\le 
L.
\end{equation}
Given $p\in M$ and $q\in M_0$, we have $p,q\in C_1$, so that $\lambda=d_C(p,q)<8$ by~\eqref{eq:inclusions_BC}. 
By \cref{rem:scaling}, the set $F=\delta_{\lambda^{-1}}(q^{-1}\star E)$ is a $(\lambda\Lambda,\frac{r_0}{\lambda})$-minimizer of the $\G$-perimeter in $C_{\frac{324}\lambda}(q^{-1})$ with $0\in\partial F$.
Since $C_{\frac{324}\lambda}(q^{-1})\supset C_{\frac{81}2}(q^{-1})\supset C_2$ for all $q\in C_1$, by the invariance properties of the excess and by definition of $M_0$, we infer that
\begin{equation}
\label{eq:F_exc}
\exc(F,0,2)
=
\exc(E,q,2\lambda)
\le 
\varepsilon.
\end{equation}
Recalling that $\lambda<8$, $F$ is a $(8\Lambda,\frac{r_0}{8})$-minimizer of the $\G$-perimeter in $C_{\frac{324}\lambda}(q^{-1})$. 
Since $\varepsilon\le\omega(L,8\Lambda,\frac{r_0}8)$ due to~\eqref{eq:eee}, by~\eqref{eq:F_exc} and again by \cref{res:small_exc_pos}, we infer that
\begin{equation*}
\sup\set*{|\hgt(v)| : v\in C_1\cap\partial F}
\le 
L.
\end{equation*}
In particular, choosing 
$v=\delta_{\lambda^{-1}}(q^{-1}\star p)\in C_1\cap\partial F$, 
we get that
\begin{equation*}
|\hgt(q^{-1}\star p)|
\le 
L\,d_C(p,q).
\end{equation*}
Since $L<1$, the above inequality, combined with the definition in~\eqref{eq:norm_C}, yields that $d_C(p,q)=\|\pi_\W(q^{-1}\star p)\|_\infty$, so that 
\begin{equation}
\label{eq:protolip}
|\hgt(q^{-1}\star p)|
\le 
L\,\|\pi_\W(q^{-1}\star p)\|_\infty
\quad
\text{for all}\ p\in M,\ q\in M_0.
\end{equation}
As a consequence, the projection $\pi_\W$ is invertible on $M_0$ and we can thus define a function $\varphi\colon\pi_\W(M_0)\to\R$ by letting $\varphi(\pi_\W(p))=\hgt(p)$ for all $p\in M_0$.
Due to~\eqref{eq:protolip}, we get that 
\begin{equation*}
|\varphi(\pi_\W(p))-\varphi(\pi_\W(q))|
\le 
L\,\|\pi_\W(q^{-1}\star p)\|_\infty
\quad
\text{for all}\ p,q\in M_0,
\end{equation*}
so that $\varphi\in\ilip(\pi_\W(M_0))$ with $\ilip(\varphi;\pi_\W(M_0))\le L<1$, in virtue of \cref{def:ilip}.
Since $M_0\subset M$, from~\eqref{eq:protobounded} we also get that 
$|\varphi(\pi_\W(p))|\le L$ for all $p\in M_0$.
By \cref{res:extension}, we can find an extension of $\varphi$ to the whole $\W$ (for which we keep the same notation) such that $\ilip(\varphi)\le c(\epsilon_2,\matn)\,L$ and $|\varphi(w)|\le L$ for all $w\in\W$. 
By construction, we also get that $M_0\subset M\cap\Gamma$, where $\Gamma=\graph(\varphi;D_1)$.
This proves~\eqref{eq:lipa_norms} and~\eqref{eq:lipa_graph}.

\vspace{1ex}

\textit{Step~2: covering argument}.
We now prove~\eqref{eq:lipa_diffsim} via a covering argument.
By definition of $M_0$, for each $q\in M\setminus M_0$ there exists $r_q\in(0,16)$ such that 
\begin{equation}
\label{eq:exc_sotto}
\int_{C_{r_q}(q)\cap\partial E}
\frac{|\nu_E-\nu|^2}2
\di\shaus_\infty^{Q-1}
>
\varepsilon\, r_q^{Q-1}.
\end{equation}
The family of balls 
$\set*{B_{2r_q}(q) : q\in M\setminus M_0}$
is a covering of $M\setminus M_0$. 
By Vitali's Covering Lemma, there exist $q_h\in M\setminus M_0$, for $h\in\N$, such that the countable subfamily 
$\set*{B_{2r_h}(q_h) : r_h={r_{q_h}},\ q_h\in M\setminus M_0,\ h\in\N}$
is disjoint, and the family $\set*{B_{10r_h}(q_h) :h\in\N}$ is still a covering of $M\setminus M_0$.
Therefore, by \cref{res:density}, we can estimate
\begin{equation}
\label{eq:stima_M-M0}
\begin{split}
\shaus_\infty^{Q-1}(M\setminus M_0)
&\le 
\sum_{h\in\N}
\shaus_\infty^{Q-1}\big((M\setminus M_0)\cap B_{10r_h}(q_h)\big)
\\
&\le 
\sum_{h\in\N}
\shaus_\infty^{Q-1}(M\cap B_{10r_h}(q_h))
\le 
c 
\sum_{h\in\N}
r_h^{Q-1},
\end{split}
\end{equation}
where $c>0$ is a constant that does not dependent on $L$, $\Lambda$ or $r_0$. 
Now note that $B_{10r_h}(q_h)\subset C_{324}$ for all $h\in\N$, since, in virtue of~\eqref{eq:norms_inftyC}, any $p\in B_{10r_h}(q_h)$ satisfies  
\begin{equation*}
\|p\|_C
\le
2\|p\|_\infty
\le 
2d_\infty(p,q_h)
+
2\|q_h\|_\infty
<
20r_h+4\|q_h\|_C
<
324.
\end{equation*}
Moreover, since $C_{r_h}(q_h)\subset B_{2r_h}(q_h)$ by~\eqref{eq:inclusions_BC}, also the cylinders $\set*{C_{r_h}(q_h) : h\in\N}$ are disjoint and contained in $C_{324}$.
Therefore, by combining~\eqref{eq:exc_sotto} with~\eqref{eq:stima_M-M0}, we get that 
\begin{equation*}
\shaus_\infty^{Q-1}(M\setminus M_0)
\le 
\frac c\varepsilon
\sum_{h\in\N}
\int_{C_{r_h}(q_h)\cap\partial E}
\frac{|\nu_E-\nu|^2}2
\di\shaus_\infty^{Q-1}
\le 
\frac c\varepsilon\,
\exc(324).
\end{equation*}
Consequently, since $M\setminus\Gamma\subset M\setminus M_0$, we conclude that 
\begin{equation*}
\shaus_\infty^{Q-1}(M\setminus\Gamma)
\le 
\frac c\varepsilon\,
\exc(k),
\end{equation*}
which is the first half of~\eqref{eq:lipa_diffsim}.
To prove the second half of~\eqref{eq:lipa_diffsim}, we observe that 
\begin{equation*}
\exc(2)
\le 
\left(\frac{324}2\right)^{Q-1}\exc(324)
\le 
\omega(L,\Lambda,r_0),
\end{equation*}
thanks to the properties of the excess and~\eqref{eq:eee}.
Hence, by~\eqref{eq:small_height_est} in \cref{res:exc_meas},
\begin{align*}
\shaus_\infty^{Q-1}(\Gamma\setminus M)
&=
\int_{\pi_\W(\Gamma\setminus M)}
\sqrt{1+|\nabla^\varphi\varphi|^2}\di\leb^{n-1}
\\
&\le
\sqrt{1+\|\nabla^\varphi\varphi\|_{L^\infty(\W)}^2}
\,\leb^{n-1}(\pi_\W(\Gamma\setminus M))
\\
&\le
\sqrt{1+\|\nabla^\varphi\varphi\|_{L^\infty(\W)}^2}
\,\shaus_\infty^{Q-1}
\big(
M\cap \pi_\W^{-1}(\pi_\W(\Gamma\setminus M))
\big).
\end{align*}
In virtue of \cref{res:bounded_igrad}, we can estimate 
\begin{equation*}
\sqrt{1+\|\nabla^\varphi\varphi\|_{L^\infty(\W)}^2}
\le C_L,
\end{equation*}
where $C_L>0$ depends on $L$ only.
Since 
$M\cap \pi_\W^{-1}(\pi_\W(\Gamma\setminus M))
\subset M\setminus\Gamma$,
we get that 
\begin{equation*}
\shaus_\infty^{Q-1}(\Gamma\setminus M)
\le 
C_L\,
\shaus_\infty^{Q-1}(M\setminus\Gamma)
\le
\frac{C_L}\varepsilon\,
\exc(k) ,
\end{equation*}
completing the proof of~\eqref{eq:lipa_diffsim}.

\vspace{1ex}

\textit{Step~3: estimate on the $L^2$ energy}.
Finally, we prove~\eqref{eq:lipa_energy}.
By \cref{res:area} and~\cite{AS10}*{Cor.~2.6}, for $\shaus^{Q-1}_\infty$-a.e.\ $p\in M\cap\Gamma$ there exists $\sigma(p)\in\set*{-1,1}$ such that 
\begin{equation*}
\nu_E(p)
=
\sigma(p)\,
\frac{\big(1,-\nabla^\varphi\varphi(\pi_\W(p))\big)}{\sqrt{1+|\nabla^\varphi\varphi(\pi_\W(p))|^2}}.
\end{equation*}
Taking into account that, for $\shaus^{Q-1}_\infty$-a.e.\ $p\in M\cap\Gamma$,
\begin{equation*}
\frac{|\nu_E(p)-\nu(p)|^2}2
=
1-\scalar*{\nu_E(p),\nu(p)}
\ge 
\frac{1-\scalar*{\nu_E(p),\nu(p)}^2}{2},
\end{equation*}
we get that 
\begin{align*}
\exc(1)
&\ge 
\int_{M\cap\Gamma}
\frac{1-\scalar*{\nu_E(p),\nu(p)}^2}{2}
\di\mu_E(p)
=
\frac12
\int_{M\cap\Gamma}
\frac{|\nabla^\varphi\varphi(\pi_\W(p))|^2}{1+|\nabla^\varphi\varphi(\pi_\W(p))|^2}
\di\mu_E(p)
\\
&=
\frac12
\int_{\pi_\W(M\cap\Gamma)}
\frac{|\nabla^\varphi\varphi(w)|^2}{1+|\nabla^\varphi\varphi(w)|^2}
\di\leb^{Q-1}(w).
\end{align*}
By \cref{res:bounded_igrad} and the scaling property of the excess, we get that
\begin{equation*}
\int_{\pi_\W(M\cap\Gamma)}
|\nabla^\varphi\varphi|^2
\di\leb^{Q-1}
\le 
C_L\,\exc(324),
\end{equation*}
where $C_L>0$ depends on $L$ only.
Moreover, by \cref{res:area}, we can estimate
\begin{align*}
\int_{\pi_\W(M\bigtriangleup\Gamma)}
|\nabla^\varphi\varphi|^2
\di\leb^{Q-1}
\le 
\int_{M\bigtriangleup\Gamma}
\frac{|\nabla^\varphi\varphi(\pi_\W(p))|^2}{1+|\nabla^\varphi\varphi(\pi_\W(p))|^2}
\di\mu_E(p)
\le 
\shaus^{Q-1}_\infty(M\bigtriangleup\Gamma),
\end{align*}
and~\eqref{eq:lipa_energy}  immediately follows from~\eqref{eq:lipa_diffsim}.
The proof is complete.
\end{proof}

\begin{bibdiv}
\begin{biblist}


%

\bib{AS10}{article}{
   author={Ambrosio, Luigi},
   author={Scienza, Matteo},
   title={Locality of the perimeter in Carnot groups and chain rule},
   journal={Ann. Mat. Pura Appl. (4)},
   volume={189},
   date={2010},
   number={4},
   pages={661--678},
   review={\MR{2678937}},
   doi={10.1007/s10231-010-0130-9},
}

\bib{ADDL20}{article}{
   author={Antonelli, Gioacchino},
   author={Di Donato, Daniela},
   author={Don, Sebastiano},
   author={Le Donne, Enrico},
   title={Characterizations of uniformly differentiable co-horizontal intrinsic graphs in Carnot groups},
   date={to appear},
   journal={Ann. Inst. Fourier (Grenoble)},
   doi={10.48550/arXiv.2005.11390},
}

\bib{BLU07}{book}{
   author={Bonfiglioli, A.},
   author={Lanconelli, E.},
   author={Uguzzoni, F.},
   title={Stratified Lie groups and potential theory for their
   sub-Laplacians},
   series={Springer Monographs in Mathematics},
   publisher={Springer, Berlin},
   date={2007},
   review={\MR{2363343}},
}

\bib{CCM09}{article}{
   author={Calin, Ovidiu},
   author={Chang, Der-Chen},
   author={Markina, Irina},
   title={Geometric analysis on $H$-type groups related to division algebras},
   journal={Math. Nachr.},
   volume={282},
   date={2009},
   number={1},
   pages={44--68},
   review={\MR{2473130}},
   doi={10.1002/mana.200710721},
}

\bib{CapCM09}{article}{
   author={Capogna, Luca},
   author={Citti, Giovanna},
   author={Manfredini, Maria},
   title={Regularity of non-characteristic minimal graphs in the Heisenberg group $\mathbb{H}^1$},
   journal={Indiana Univ. Math. J.},
   volume={58},
   date={2009},
   number={5},
   pages={2115--2160},
   review={\MR{2583494}},
   doi={10.1512/iumj.2009.58.3673},
}

\bib{CCM10}{article}{
   author={Capogna, Luca},
   author={Citti, Giovanna},
   author={Manfredini, Maria},
   title={Smoothness of Lipschitz minimal intrinsic graphs in Heisenberg
   groups $\mathbb{H}^n$, $n>1$},
   journal={J. Reine Angew. Math.},
   volume={648},
   date={2010},
   pages={75--110},
   review={\MR{2774306}},
   doi={10.1515/CRELLE.2010.080},
}

\bib{CHY09}{article}{
   author={Cheng, Jih-Hsin},
   author={Hwang, Jenn-Fang},
   author={Yang, Paul},
   title={Regularity of $C^1$ smooth surfaces with prescribed $p$-mean
   curvature in the Heisenberg group},
   journal={Math. Ann.},
   volume={344},
   date={2009},
   number={1},
   pages={1--35},
   review={\MR{2481053}},
   doi={10.1007/s00208-008-0294-4},
}

\bib{CMPS14}{article}{
   author={Citti, Giovanna},
   author={Manfredini, Maria},
   author={Pinamonti, Andrea},
   author={Serra Cassano, Francesco},
   title={Smooth approximation for intrinsic Lipschitz functions in the
   Heisenberg group},
   journal={Calc. Var. Partial Differential Equations},
   volume={49},
   date={2014},
   number={3-4},
   pages={1279--1308},
   review={\MR{3168633}},
   doi={10.1007/s00526-013-0622-8},
}

\bib{CDKR91}{article}{
   author={Cowling, Michael},
   author={Dooley, Anthony H.},
   author={Kor\'{a}nyi, Adam},
   author={Ricci, Fulvio},
   title={$H$-type groups and Iwasawa decompositions},
   journal={Adv. Math.},
   volume={87},
   date={1991},
   number={1},
   pages={1--41},
   issn={0001-8708},
   review={\MR{1102963}},
   doi={10.1016/0001-8708(91)90060-K},
}

\bib{DGN10}{article}{
   author={Danielli, D.},
   author={Garofalo, N.},
   author={Nhieu, D. M.},
   title={Sub-Riemannian calculus and monotonicity of the perimeter for
   graphical strips},
   journal={Math. Z.},
   volume={265},
   date={2010},
   number={3},
   pages={617--637},
   review={\MR{2644313}},
   doi={10.1007/s00209-009-0533-8},
}

\bib{D20}{article}{
   author={Di Donato, Daniela},
   title={Intrinsic Lipschitz graphs in Carnot groups of step 2},
   journal={Ann. Acad. Sci. Fenn. Math.},
   volume={45},
   date={2020},
   number={2},
   pages={1013--1063},
   review={\MR{4112274}},
   doi={10.5186/aasfm.2020.4556},
}

\bib{FS16}{article}{
   author={Franchi, Bruno},
   author={Serapioni, Raul Paolo},
   title={Intrinsic Lipschitz graphs within Carnot groups},
   journal={J. Geom. Anal.},
   volume={26},
   date={2016},
   number={3},
   pages={1946--1994},
   review={\MR{3511465}},
   doi={10.1007/s12220-015-9615-5},
}

\bib{FSS01}{article}{
   author={Franchi, Bruno},
   author={Serapioni, Raul},
   author={Serra Cassano, Francesco},
   title={Rectifiability and perimeter in the Heisenberg group},
   journal={Math. Ann.},
   volume={321},
   date={2001},
   number={3},
   pages={479--531},
   issn={0025-5831},
   review={\MR{1871966}},
   doi={10.1007/s002080100228},
}

\bib{FSS03}{article}{
   author={Franchi, Bruno},
   author={Serapioni, Raul},
   author={Serra Cassano, Francesco},
   title={On the structure of finite perimeter sets in step 2 Carnot groups},
   journal={J. Geom. Anal.},
   volume={13},
   date={2003},
   number={3},
   pages={421--466},
   review={\MR{1984849}},
   doi={10.1007/BF02922053},
}

\bib{FSS06}{article}{
   author={Franchi, Bruno},
   author={Serapioni, Raul},
   author={Serra Cassano, Francesco},
   title={Intrinsic Lipschitz graphs in Heisenberg groups},
   journal={J. Nonlinear Convex Anal.},
   volume={7},
   date={2006},
   number={3},
   pages={423--441},
   review={\MR{2287539}},
}

\bib{G84}{book}{
   author={Giusti, Enrico},
   title={Minimal surfaces and functions of bounded variation},
   series={Monographs in Mathematics},
   volume={80},
   publisher={Birkh\"{a}user Verlag, Basel},
   date={1984},
   review={\MR{0775682}},
   doi={10.1007/978-1-4684-9486-0},
}

\bib{K80}{article}{
   author={Kaplan, Aroldo},
   title={Fundamental solutions for a class of hypoelliptic PDE generated by composition of quadratic forms},
   journal={Trans. Amer. Math. Soc.},
   volume={258},
   date={1980},
   number={1},
   pages={147--153},
   review={\MR{0554324}},
   doi={10.2307/1998286},
}

\bib{KS04}{article}{
   author={Kirchheim, Bernd},
   author={Serra Cassano, Francesco},
   title={Rectifiability and parameterization of intrinsic regular surfaces
   in the Heisenberg group},
   journal={Ann. Sc. Norm. Super. Pisa Cl. Sci. (5)},
   volume={3},
   date={2004},
   number={4},
   pages={871--896},
   review={\MR{2124590}},
}

\bib{LT22}{article}{
   author={Le Donne, Enrico},
   author={Tripaldi, Francesca},
   title={A cornucopia of Carnot groups in low dimensions},
   journal={Anal. Geom. Metr. Spaces},
   volume={10},
   date={2022},
   number={1},
   pages={155--289},
   review={\MR{4490195}},
   doi={10.1515/agms-2022-0138},
}

\bib{M12}{book}{
   author={Maggi, Francesco},
   title={Sets of finite perimeter and geometric variational problems},
   series={Cambridge Studies in Advanced Mathematics},
   volume={135},
   publisher={Cambridge University Press, Cambridge},
   date={2012},
   review={\MR{2976521}},
   doi={10.1017/CBO9781139108133},
}

\bib{M03}{article}{
   author={Magnani, Valentino},
   title={A blow-up theorem for regular hypersurfaces on nilpotent groups},
   journal={Manuscripta Math.},
   volume={110},
   date={2003},
   number={1},
   pages={55--76},
   issn={0025-2611},
   review={\MR{1951800}},
   doi={10.1007/s00229-002-0303-y},
}

\bib{M17}{article}{
   author={Magnani, Valentino},
   title={A new differentiation, shape of the unit ball, and perimeter measure},
   journal={Indiana Univ. Math. J.},
   volume={66},
   date={2017},
   number={1},
   pages={183--204},
   review={\MR{3623407}},
   doi={10.1512/iumj.2017.66.6007},
}

\bib{M14}{article}{
   author={Monti, Roberto},
   title={Lipschitz approximation of $\mathbb{H}$-perimeter minimizing boundaries},
   journal={Calc. Var. Partial Differential Equations},
   volume={50},
   date={2014},
   number={1-2},
   pages={171--198},
   review={\MR{3194680}},
   doi={10.1007/s00526-013-0632-6},
}

\bib{M15}{article}{
   author={Monti, Roberto},
   title={Minimal surfaces and harmonic functions in the Heisenberg group},
   journal={Nonlinear Anal.},
   volume={126},
   date={2015},
   pages={378--393},
   review={\MR{3388885}},
   doi={10.1016/j.na.2015.03.013},
}

\bib{MSV08}{article}{
   author={Monti, Roberto},
   author={Serra Cassano, Francesco},
   author={Vittone, Davide},
   title={A negative answer to the Bernstein problem for intrinsic graphs in the Heisenberg group},
   journal={Boll. Unione Mat. Ital. (9)},
   volume={1},
   date={2008},
   number={3},
   pages={709--727},
   review={\MR{2455341}},
}

\bib{MS17}{article}{
   author={Monti, Roberto},
   author={Stefani, Giorgio},
   title={Improved Lipschitz approximation of $H$-perimeter minimizing
   boundaries},
   journal={J. Math. Pures Appl. (9)},
   volume={108},
   date={2017},
   number={3},
   pages={372--398},
   review={\MR{3682744}},
   doi={10.1016/j.matpur.2017.04.002},
}

\bib{MV15}{article}{
   author={Monti, Roberto},
   author={Vittone, Davide},
   title={Height estimate and slicing formulas in the Heisenberg group},
   journal={Anal. PDE},
   volume={8},
   date={2015},
   number={6},
   pages={1421--1454},
   review={\MR{3397002}},
   doi={10.2140/apde.2015.8.1421},
}

\bib{P06}{article}{
   author={Pauls, Scott D.},
   title={$H$-minimal graphs of low regularity in $\mathbb{H}^1$},
   journal={Comment. Math. Helv.},
   volume={81},
   date={2006},
   number={2},
   pages={337--381},
   review={\MR{2225631}},
   doi={10.4171/CMH/55},
}

\bib{R09}{article}{
   author={Ritor\'{e}, Manuel},
   title={Examples of area-minimizing surfaces in the sub-Riemannian Heisenberg group $\mathbb{H}^1$ with low regularity},
   journal={Calc. Var. Partial Differential Equations},
   volume={34},
   date={2009},
   number={2},
   pages={179--192},
   issn={0944-2669},
   doi={10.1007/s00526-008-0181-6},
}

\bib{SS82}{article}{
   author={Schoen, Richard},
   author={Simon, Leon},
   title={A new proof of the regularity theorem for rectifiable currents which minimize parametric elliptic functionals},
   journal={Indiana Univ. Math. J.},
   volume={31},
   date={1982},
   number={3},
   pages={415--434},
   review={\MR{0652826}},
   doi={10.1512/iumj.1982.31.31035},
}

\bib{S16}{article}{
   author={Serra Cassano, Francesco},
   title={Some topics of geometric measure theory in Carnot groups},
   conference={
      title={Geometry, analysis and dynamics on sub-Riemannian manifolds. Vol. 1},
   },
   book={
      series={EMS Ser. Lect. Math.},
      publisher={Eur. Math. Soc., Z\"{u}rich},
   },
   date={2016},
   pages={1--121},
   review={\MR{3587666}},
}

\bib{SV14}{article}{
   author={Serra Cassano, Francesco},
   author={Vittone, Davide},
   title={Graphs of bounded variation, existence and local boundedness of
   non-parametric minimal surfaces in Heisenberg groups},
   journal={Adv. Calc. Var.},
   volume={7},
   date={2014},
   number={4},
   pages={409--492},
   review={\MR{3276118}},
   doi={10.1515/acv-2013-0105},
}

\bib{V22}{article}{
   author={Vittone, Davide},
   title={Lipschitz graphs and currents in Heisenberg groups},
   journal={Forum Math. Sigma},
   volume={10},
   date={2022},
   pages={Paper No. e6, 104},
   review={\MR{4377000}},
   doi={10.1017/fms.2021.84},
}

\end{biblist}
\end{bibdiv}

\end{document}